\documentclass[a4paper,12pt]{amsart}

\usepackage{t1enc}

\usepackage{amsmath,amsfonts,amssymb}
\usepackage{bm} 

\usepackage[usenames,dvipsnames]{xcolor}
\usepackage{tikz-cd}
\usetikzlibrary{matrix,arrows,calc}
\usepackage{calc}

\usepackage{booktabs} 
\usepackage{nicematrix} 

\usepackage[colorlinks=true,linkcolor=blue,citecolor=red,urlcolor=Violet,bookmarksdepth=subsection,final]{hyperref}

\tikzset{ampersand replacement=\&}

\def\bg{\bm{g}}
\def\bep{\bm{\epsilon}}

\def\id{\mathrm{id}}
\newcommand{\newd}{\tilde{d}}
\newcommand{\im}{\operatorname{im}}

\newcommand{\Hop}{H}
\newcommand{\Cop}{C}
\newcommand{\Dop}{D}
\newcommand{\Kop}{K}
\newcommand{\tH}{\tilde{\Hop}}

\def\cE{\mathcal{E}}
\def\cV{\mathcal{V}}
\def\cW{\mathcal{W}}
\def\cH{\mathcal{H}}

\def\si{\sigma}
\def\ta{\tau}
\def\ph{\varphi}
\def\Ga{\Gamma}
\def\La{\Lambda}
\def\Ph{\Phi}
\def\Rho{\operatorname{P}}
\def\Up{\Upsilon}
\def\na{\nabla}

\newcommand{\wh}[1]{\widehat{#1}}
\newcommand{\ol}[1]{\overline{#1}}
\newcommand{\wt}[1]{\widetilde{#1}}

\newtheorem{theorem}{Theorem}[section]
\newtheorem{definition}[theorem]{Definition}
\newtheorem{lemma}[theorem]{Lemma}
\newtheorem{proposition}[theorem]{Proposition}

\theoremstyle{remark}
\newtheorem{remark}[theorem]{\rm\bf Remark}

\begin{document}

\title[Compatibility complexes for the C2E operator]{Compatibility complexes for the conformal-to-Einstein operator}

\author{Igor Khavkine}
\address{Institute of Mathematics of the Czech Academy of Sciences}
\curraddr{\v{Z}itn\'a 25, 110 00 Prague 1, Czech Republic}
\email{khavkine@math.cas.cz}

\author{Josef \v{S}ilhan}
\address{Institute of Mathematics and Statistics, Masaryk University}
\curraddr{Building 08, Kotl\'a\v{r}sk\'a 2, 611 37 Brno, Czech Republic}
\email{silhan@math.muni.cz}

\thanks{\emph{Funding}:
IK's was partially supported by GA\v{C}R project GA22-00091S and the \emph{Czech Academy of Sciences} Research Plan RVO: 67985840.
J\v{S} was supported by GA\v{C}R project GA22-00091S}

\begin{abstract}
The conformal-to-Einstein operator is a conformally invariant linear
overdetermined differential operator whose non-vanishing solutions
correspond to Einstein metrics within a conformal class. We construct
compatibility complexes for this operator under natural genericity
assumptions on the Weyl curvature in dimension $n\ge 4$, which implies
at most one independent solution. An analogous
result for the projective-to-Ricci-flat operator is obtained as well.
The construction is based on a method, previously proposed by one of the
authors, that leverages existing symmetries and geometric properties of
the starting operator. In this case the compatibility complexes
consist of, respectively, conformally and projectively invariant
operators. We also make some comments on how Bernstein-Gelfand-Gelfand
sequences can be interpreted as compatibility complexes
in the locally flat case, which may be of
general interest.
\end{abstract}

\keywords{Conformal geometry, overdetermined differential operators,
compatibility complex, Bernstein-Gelfand-Gelfand sequence}

\subjclass[2020]{%
	53C18, 
  35N10, 
  53B10
}

\maketitle

\section{Introduction} \label{s.intro}

The study of conformally related Einstein metrics is a classical problem~\cite{Brinkmann}
as well as an active area of recent research, see, e.g.,\ the monograph~\cite{Besse} 
or discussion of curvature obstructions~\cite{GoNu}.
Conformal geometry $(M,[g])$ on a smooth manifold $M$ is a class of conformally related metrics (of any signature)
$[g] = \{ e^{2\Upsilon} g\}$ for $\Upsilon \in C^\infty(M)$ positive everywhere and $n = \dim M \geq 3$.
Einstein metrics in $[g]$ are locally controlled by the so-called \emph{conformal-to-Einstein} operator
\begin{equation} \label{aE}
E_0 \colon \cE[1] \to \cE_{(ab)_0}[1], \quad \si \mapsto \bigl( \na_{(a} \na_{b)_0} + \Rho_{(ab)_0} \bigl) \si.
\end{equation}
This operator is linear, conformally invariant, overdetermined, and solutions $\si \in \cE[1]$ without zeros are in 1-1 correspondence with Einstein metrics $\hat{g}$ in $[g]$.
Specifically, if we identify $\si$ with a function using the Riemannian volume form of $g$ then 
$\hat{g} = \sigma^{-2} g$.
We use the notation $\cE[w]$, $w \in \mathbb{R}$ for the bundle of ordinary $(-\frac{w}{n})$-densities, 
i.e.,\ $\La^n T^*M = \cE[-n]$, 
$\na$ is the Levi-Civita connection of $g$, $(-)_0$ denotes the trace-free part and $\Rho_{ab}$ 
denotes the Schouten tensor of $g$. We also use the abstract index notation, i.e.,\ $\cE_a = T^*M$, $\cE_{(ab)} = S^2T^*M$, $\cE_{[ab]} = \La^2 T^*M$ etc.
We put $\cE_a[w] := \cE_a \otimes \cE[w]$ and similarly for more complicated tensor bundles.
More generally, all differential operators that we will consider below
will be linear, with smooth coefficients, and mapping between some
vector bundles over some fixed manifold $M$. We will denote the
composition of two differential operators $K$ and $L$ by $K\circ L$, or
simply by $K L$, if no confusion is possible.

After the obvious question of finding the solutions of a differential equation
such as $E_0(\sigma) = 0$, another important question is to determine the range
of the operator $E_0$. For instance, analyzing the image of the exterior derivative operator $d$ on forms leads in a sense to the de~Rham complex and de~Rham cohomology. 
In another context, when analyzing the symmetries of a
geometric PDE, such as the Laplace or Dirac equations, intermediate
calculations run into the question of whether a certain tensor is in the image
of a given differential operator. Specific examples lead, e.g.,\ the exterior derivative on functions~\cite{MRS} or the conformal Killing operator on 
vector fields~\cite{ABB}. 
In part, we are motivated to look at the operator $E_0$ as a warm-up to studying similar more complicated operators, as approached not from the direction of maximal symmetry (or flatness), but from the direction of more generic geometries.

According to the theory of
overdetermined PDEs~\cite{spencer, bcggg}, the image of a linear differential operator
like $E_0$ acting on smooth functions can be locally characterized as the
kernel of another differential operator, say $E_1$, its so-called
\emph{compatibility operator} (cf.~Definition~\ref{def:compat} for the precise
notion that we will use below). When dealing with compatibility operators, it
is actually convenient to determine at the same time a whole sequence of them,
$E_k$ being the compatibility complex of $E_{k-1}$, the so-called
\emph{compatibility complex}. In this work, we will explicitly construct a
compatibility complex for the conformal-to-Einstein operator $E_0$
(Section~\ref{sec:c2e-compat}) and a closely related operator from projective
geometry (Section~\ref{sec:p2e-compat}) under certain curvature assumptions.

An explicit construction of compatibility complexes is mostly known only for a geometrical setting 
with symmetries. A famous example is the Calabi complex for the Killing operator on Riemannian metrics 
of constant sectional curvature~\cite{calabi, eastwood-calabi, kh-calabi}.
From a modern point of view, this is an example of the Bernstein-Gelfand-Gelfand (BGG)
complex in the flat projective geometry. More generally, we can consider BGG sequences in parabolic geometries
\cite{CSS,CD}. In the locally flat case, any such sequence provides a compatibility complex of the first operator (cf.\ Section~\ref{s.cc-flat}).
These (overdetermined) operators are known as \emph{first BGG operators} and~\eqref{aE} is a prominent example.
A BGG sequence will not provide a compatibility complex beyond the locally flat case, however, it will even fail to be a complex.
In fact, almost nothing is known about compatibility complexes for first BGG operators in curved parabolic 
geometries. Compatibility complexes for Killing operators on certain (pseudo)Riemannian manifolds of physical and mathematical interest in General Relativity are known~\cite{kh-compat, aabkhw}; they were in fact constructed by the same methods that we will use in this work. 
Furthermore, a partial result was obtained  in~\cite{celm-riemann, celm-lorentz} 
where the first two operators  of a compatibility complex (i.e.,\ the Killing operator 
and its compatibility operator) were obtained on locally symmetric spaces. 

Our main result is a compatibility complex for $E_0$ in~\eqref{aE} in suitably generic cases as follows. Assuming $n \geq 4$,
the conformal geometry $(M,[g])$   is locally flat if and only if the Weyl tensor 
$W_{abcd}$ vanishes. As the `opposite' case, we shall call $(M,[g])$ 
\emph{generic} if the bundle map $\cE^d \to \cE_{abc}$
induced by $W_{abcd}$ is injective. Then the solution space of~\eqref{aE} is at most 1-dimensional, 
cf.\ Section~\ref{s.generic} for details.

\begin{theorem} \label{main}
Assume $(M,[g])$ is generic and the solution space of~\eqref{aE} is 1-dimensional. Then 
there is a compatibility complex for $E_0$ of the form
$$
\begin{tikzcd}[column sep=1.5cm]
	\cE[1] \ar{r}{E_0}
		\&
	\cE_{(ab)_0}[1]
		\ar{r}{\left( \begin{smallmatrix} \id - D_1 C_1 \\ \newd \, C_1 \end{smallmatrix} \right)}
		\&
	\begin{smallmatrix} \cE_{(ab)_0}[1] \\ \oplus \\ \cE_{[ab]}[1] \end{smallmatrix}
		\ar{r}{\left(\begin{smallmatrix} C_1 & -H'_1 \\ 0 & \newd \end{smallmatrix}\right)}
		\&	
	\begin{smallmatrix} \cE_a[1] \\ \oplus \\ \cE_{[abc]}[1] \end{smallmatrix}
	\\
		\&
		\ar{r}{\big(\begin{smallmatrix} 0 & \newd \end{smallmatrix}\big)}
		\&
	\cE_{[abcd]}[1] .
		\ar{r}{\newd}
		\&
	\cE_{[abcde]}[1] \rlap{~$\cdots$.}
\end{tikzcd}
$$
Here the operators 
\begin{align*}
	C_1\colon& \cE_{(ab)_0}[1] \to \cE_a[1], &
	D_1\colon& \cE_a \to \cE_{(ab)_0}[1], \\
	\tilde{d}\colon& \cE_{[a_1 \ldots a_k]}[1] \to \cE_{[a_0 \ldots a_k]}[1], &
	H'_1\colon& \cE_{[ab]}[1] \to \cE_a[1]
\end{align*}
are defined in Section~\ref{s.invariant}. All these operators are
conformally invariant. The rest of the complex is just the twisted
exterior $\newd$ acting on forms twisted with $\cE[1]$.
\end{theorem}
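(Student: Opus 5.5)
The plan is to obtain the complex from the general ``equivalence-of-operators'' lemma stated earlier (the method from the cited prior work), which transfers a known compatibility complex of one operator to another operator that is homotopy equivalent to it via differential operators. The model operator here is the first operator of the twisted de~Rham complex, $\newd\colon \cE[1]\to\cE_a[1]$. Under the hypotheses, the solution space of~\eqref{aE} is spanned by a single nowhere-vanishing (at least locally) solution $\si_0$. I would take $\newd$ to be the exterior covariant derivative of the flat connection on the line bundle $\cE[1]$ for which $\si_0$ is parallel, so that $\newd\circ\newd=0$. It has to be checked from Section~\ref{s.invariant} that this $\newd$ is conformally invariant, which should follow from its tractor description via the parallel tractor of $\si_0$. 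Since this connection is flat, the twisted de~Rham sequence $\cE[1]\to\cE_a[1]\to\cE_{[ab]}[1]\to\cdots$ is locally exact. It is therefore a compatibility complex for $\newd$ on $\cE[1]$ by the Poincar\'e lemma in a local parallel frame.

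The key step is to establish the intertwining and homotopy identities that make $E_0$ and $\newd$ equivalent:
\[
C_1\circ E_0=\newd,\qquad D_1\circ \newd = E_0 ,\qquad \text{and}\qquad C_1 D_1 = \id - H'_1\newd \ \text{ on } \cE_a[1].
\]
There may be an extra homotopy term on the $\cE[1]$ side, which I expect to vanish. Genericity enters at exactly this point. The integrability condition of $E_0$ obtained by differentiating twice gives $W_{abc}{}^d\na_d\si$ plus a Cotton-tensor term applied to $\si$. Injectivity of $W$ lets one solve this algebraically for $\na_a\si$ in terms of $E_0\si$ and its derivative. That is precisely the construction of $C_1$, and subtracting the analogous expression for $\si_0$ produces $\newd$. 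With $C_1$, $D_1$ and $H'_1$ as defined in Section~\ref{s.invariant}, these identities are direct, if lengthy, verifications using the Bianchi identities and $E_0\si_0=0$.

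Once the identities hold, I would invoke the general lemma. It asserts that if $E_0$ and $\newd$ are related by such $C,D,H$, then a compatibility complex for $E_0$ is obtained by gluing the twisted de~Rham complex onto the projector $\id-D_1C_1$. Concretely, the second operator is $\left(\begin{smallmatrix}\id-D_1C_1\\ \newd C_1\end{smallmatrix}\right)$ and the third is $\left(\begin{smallmatrix}C_1 & -H'_1\\ 0&\newd\end{smallmatrix}\right)$, and from there on the complex is $\newd$. I would nevertheless check directly that each composition vanishes and that each stage is exact, as a guard against sign errors. For example, $(\id-D_1C_1)E_0=E_0-D_1\newd=0$. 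As another example, $C_1(\id-D_1C_1)-H'_1\newd C_1 = C_1-(\id-H'_1\newd)C_1-H'_1\newd C_1=0$. For exactness: if $(\id-D_1C_1)h=0$ and $\newd C_1h=0$, then $C_1h=\newd\si$ for some $\si$, so $h=D_1\newd\si=E_0\si$.

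I expect the main obstacle to be the verification of the homotopy identity $C_1D_1=\id-H'_1\newd$, including the precise form of $H'_1$, together with the conformal invariance of all the pieces. This requires second-order curvature identities, and it also requires controlling the inverse of the Weyl map, which is only a left inverse because the map $\cE^d\to\cE_{abc}$ is merely injective. I would attack it by working in the Einstein scale $\si_0$, where $\Rho_{ab}$ is pure trace and $\newd=d$. This reduces everything to computations with $\na$ and $W$ alone, after which invariance carries the identities back to an arbitrary scale.
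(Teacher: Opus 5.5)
Your proposal is correct and follows the paper's proof. The paper likewise applies Lemma~\ref{lem:onesol-equiv} with $K'_i=\newd$, using \eqref{comm} and \eqref{CDcomp} with $dZ=0$, $\Ph(Z)=0$ to obtain exactly your three identities; your flat connection making $\si_0$ parallel coincides with the paper's $\wt{\na}$, because \eqref{compinv} with $E_0\si_0=0$ gives $\wt{\na}\si_0=0$.
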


In addition, we also obtain a compatibility complex when $E_0$ has 0-dimensional solution space
in Section~\ref{sec:nosol}.
This complex is much shorter -- there are only three operators (including $E_0$).
The latter case can be also adapted to dimension 3. This in particular means
that the problem of a compatibility complex for $E_0$ is fully solved in dimension 3 for
any signature and dimension 4 in the Riemannian signature, cf.\ discussion
in Remark~\ref{lowdim}.

In Section~\ref{sec:conformal} we quickly summarize our conventions and relevant facts from conformal geometry. Then, Section~\ref{s.c2e} presents all properties of the conformal-to-Einstein operator $E_0$ that will be relevant for our main results. Section~\ref{sec:compat} briefly recalls the basic ideas about compatibility complexes and summarizes the construction method from~\cite{kh-compat}, which works for operators that are uniformly of finite type, of which our $E_0$ and its projective analog are examples. Finally, in Section~\ref{sec:c2e-compat} we give the proof of our main result (Theorem~\ref{main}) for \emph{generic} conformal structures, while in Section~\ref{sec:p2e-compat}, we obtain an analogous result for the projective analog of $E_0$ (Theorem~\ref{thm:onesol-proj-cc}).

\section{Conformal geometry} \label{sec:conformal}

Let $(M,[g])$ be a conformal manifold of the dimension $n \geq 3$ and any  
signature $(p,q)$ where $n = p+q$. We shall use the notation mentioned in the beginning of Section~\ref{s.intro}.
The conformal structure 
gives rise to the \emph{conformal metric} $\bg_{ab} \in \cE_{(ab)}[2]$ with the inverse $\bg^{ab} \in \cE^{(ab)}[-2]$.
We have $\na_a \bg_{bc}=0$ for all metrics (and corresponding Levi-Civita connections $\na$) in $[g]$.
Further we have the conformal volume form $\bep_{a_1 \ldots a_n} \in \cE_{[a_1 \ldots a_n]}[n]$ which is also parallel.

Denoting by $\na$ and $\hat{\na}$ the Levi-Civita connection corresponding to $g_{ab}$ and 
$\hat{g}_{ab} = e^{2\Up}g_{ab}$,
respectively, and putting $\Up_a = \na_a \Up$, we have
\begin{equation} \label{na-trans}
\begin{split}
& \hat{\na}_a \si = \na_a \si + w \Up_a \si, \\
& \hat{\na}_a \ta_b = \na_a \ta_b - \Up_a \ta_b - \Up_b \ta_a + \Up^r \ta_r \bg_{ab}
\end{split}
\end{equation}
where $\si \in \cE[w]$ and $\ta_b \in \cE_b$. An analogous relation between $\hat{\na}_a$ and $\na_a$
on general tensor bundles can be computed using the Leibniz rule.
We denote by $R_{ab}{}^c{}_d$ the Riemann curvature tensor of $\na$, by $W_{ab}{}^c{}_d$ its Weyl tensor, by 
$Ric_{ab} = (n-2)P_{ab} + g_{ab} J$ the Ricci tensor, which also gives its relation to the Schouten tensor $P_{ab}$ and its trace $J = P^a{}_a$ its trace, and by $Y_{abc} = 2 \nabla_{[b} P_{c]a}$ the Cotton tensor.
Note the Weyl tensor is independent of the choice of the metric from $[g]$.
Note $(M,[g])$ is locally flat for $n \geq 4$ if and only if $W_{abcd}=0$ and, for $n=3$, if and only if $Y_{abc}=0$.

\section{Conformal-to-Einstein operator} \label{s.c2e}

We study the system of PDEs given by the 2nd order linear differential operator
\begin{equation} \label{E}
E=E_0: \cE[1] \to \cE_{(ab)_0}[1], \quad \si \mapsto (\na_{(a}\na_{b)_0} + P_{(ab)_0})\sigma
\end{equation}
where the subscript $0$ indicates the trace-free part with respect to
$\bg_{ab}$. This operator is conformally invariant and its non-vanishing
solutions provide rescaling to  Einstein metrics in $[g]$. That is, $[g]$ contains an Einstein metric
iff $E$ has at least one solution (without zeros).

Our aim is to construct a compatibility complex for~\eqref{E}. 
This seems to be difficult in general so we shall consider
only two special cases (which are rather opposite): the locally flat case and  the `generic case'.
In the former case, the well known BGG resolution 
provides the compatibility complex (cf.~Section~\ref{s.cc-flat}). Our main interest is the generic case; 
this is made precise in Section~\ref{s.generic}.

\subsection{Conformally flat case.} \label{s.BGG}
The operator $E_0$ is the first in the sequence of conformally  invariant operators $E_0, \ldots, E_{n-1}$ where 
$E_{n-1}$
is the formal adjoint of $E_0$ and remaining operators are of the first order,
\begin{equation} \label{BGGseq}
\begin{aligned}
E_k: \cE_{[a_1 \ldots a_k]} \boxtimes \cE_b[1] &\to \cE_{[a_0 \ldots a_k]} \boxtimes \cE_b[1],
	\\
\tau_{a_1 \ldots a_kb} &\mapsto (k+1)\Pr{}_{\boxtimes} \na_{[a_0} \tau_{a_1 \ldots a_k]b}
\end{aligned}
\end{equation}
for $1 \leq k \leq n-2$. Here $\boxtimes$ denotes the Cartan product, i.e.,\ the bundle 
$\cE_{[a_1 \ldots a_k]} \boxtimes \cE_b$ is given by the Young tableaux
with one column of the length $k$ and second column of the length $1$
where we also project to the trace-free part with respect to $\bg_{ab}$;
$\Pr{}_{\boxtimes}$ is exactly the projection to that bundle. 
In particular, $\cE_a \boxtimes \cE_b = \cE_{(ab)_0}$.

This sequence can be obtained form the exterior derivative twisted with the so called tractor connection
-- which is flat in the locally flat case -- via technique known as `BGG machinery'. 
Assuming the locally flat setting, the tractor connection is flat hence and the sequence is known as the  
\emph{BGG complex}, which is then known to be a compatibility complex in
the sense of Definition~\ref{def:compat}. 
We refer to Section~\ref{s.cc-flat} for details.

\subsection{Generic conformal structures.} \label{s.generic}
Assuming $n \geq 4$, we shall call the conformal class $(M,[g])$ \emph{generic}%
	\footnote{Called \emph{weakly generic} in~\cite[Sec.2.5]{GoNu}.} %
if the Weyl tensor is injective as the map
\begin{equation} \label{generic}
W_{abcd}\colon \cE^d \to \cE_{abc}[2], \quad v^d \mapsto W_{abcd}v^d.
\end{equation}
Then there exists an inversion $\ol{W}{}^{abce} \in \cE^{[ab]ce}[-2]$,
$$
\ol{W}{}^{abce}: \cE_{abc}[2] \to \cE^e
\quad \text{such that} \quad \ol{W}{}^{abce}W_{abcd} = \delta_d^e.
$$
Note such an inversion is not given uniquely. In that case, a specific
choice would have to be used as an extra geometric input to the rest of
our discussion. However, sometimes, there is a preferred choice. For
instance~\cite[Sec.2.5]{GoNu}, if the tensor
\begin{equation} \label{eq:V-from-W}
	V_a^b := W_{cdea} W^{cdeb}
\end{equation}
is invertible as an endomorphism (equivalently $\det V \ne 0$), with
inverse $\ol{V}{}_a^b V_b^c = \delta_a^c$, we can use the preferred choice
\begin{equation} \label{eq:Winv-from-V}
	\ol{W}{}^{abcd} := W^{abce} \ol{V}{}_e^d .
\end{equation}

The genericity assumption leads~\cite{GoNu} to explicit curvature obstructions for existence of solutions
of $E_0$ which we now briefly summarize. Using the operator $E_1$ from Section~\ref{s.BGG}, explicitly 
given by $E_1(\tau)_{abc} = 2\Pr_{\boxtimes}(\na_{[a}\tau_{b]c})$ for $\tau_{ab} \in \cE_{(ab)_0}[1]$, we compute
\begin{equation} \label{comp}
F_{abc} := (E_1 \circ E_0 (\si))_{abc} = W_{abcd}(\na^d \si) +  Y_{cab} \si
	\in \bigl( \cE_{[ab]} \boxtimes \cE_{c}\bigr)[1] .
\end{equation}
Applying the inverse $\ol{W}{}^{abce}$ to~\eqref{comp}, we obtain
\begin{equation} \label{compinv}
\na_a \si + Z_a \si = \ol{W}{}^{rst}{}_a F_{rst} \quad \text{for} \quad 
Z_a := \ol{W}{}^{rst}{}_a Y_{trs} \in \cE_a.
\end{equation}
Note the 1-form $Z$ -- determined by the choice of the inversion $\ol{W}{}^{abcd}$ -- is not conformally invariant.
If we change the metric to $\hat{g}_{ab} = e^{2\Up}g_{ab}$ , the Cotton tensor transforms as
$\wh{Y}_{bcd} = Y_{bcd} + \Up^a W_{abcd}$ where $\Up_a = \na_a \Up$. Therefore 
\begin{equation} \label{Ztrans}
\wh{Z}_a = Z_a - \Up_a.
\end{equation}

Previous observation give rise to curvature obstructions for existence of solutions of $E_0$.
If $\si$ is a solution then $\na_a \si + Z_a\si=0$. If we differentiate this relation once more, project
either to symmetric or skew-symmetric part and use $E_0(\si)=0$ then 
\begin{equation} \label{obstruct}
(dZ)_{ab} =0 \quad \text{and}  \quad \Ph(Z)_{ab}=0
\end{equation}
where
\begin{align*}
& (dZ)_{ab} = 2\na_{[a}Z_{b]} \in \cE_{[ab]},  \\
& \Ph(Z)_{ab} := \na_{(a}Z_{b)_0} - \Rho_{(ab)_0} - Z_{(a}Z_{b)_0}  \in \cE_{(ab)_0}.
\end{align*}
In fact, one can easily see that $E_0$ has a solution if and only if~\eqref{obstruct} holds.
In particular, the genericity assumption means the solution space of $E_0$
is at most 1-dimensional.  Finally note that both $(dZ)_{ab}$ and  $\Ph(Z)_{ab}$ are conformally invariant
for the scale-dependent 1-form $Z_a$ satisfying~\eqref{Ztrans}. This is easy to verify
 by a direct computation. We refer to~\cite{GoNu} for a more detailed presentation.

\subsection{Invariant differential operators} \label{s.invariant}
Invariant differential operators which are also natural -- 
in the sense they depend only on the conformal class $[g]$ and not other choices -- are fully classified 
in the locally flat setting~\cite{BC}. Existence of their
curved analogous is more involved but clearly operators $E_i$ from Section~\ref{s.BGG} are well defined
on any conformal structure. 

We shall need more operators which will be defined using the 1-form 
$Z$ from~\eqref{compinv}. These operators will be conformally invariant but not natural as they are defined
only for generic conformal classes and also depend on the choice on inversion $\ol{W}{}^{abcd}$. 
First observe that
\begin{equation} \label{newna}
\wt{\na}_a: \cE[1] \to \cE_a[1], \quad \wt{\na}_a\si = \na_a \si + Z_a \si
\end{equation}
for $\si \in \cE[1]$, is a conformally invariant connection on $\cE[1]$. 
(Note that $\wt{\na}$ is called the $\mathcal{C}$-connection in~\cite{cEspaces}.)
Indeed,  it follows from~\eqref{na-trans} that $\na_a\si$ transforms
to $\na_a \si + \Up_a \si$ in another metric $e^{2\Up}g_{ab}$, $\Up_a = \na_a \Up$ and $Z_a$
transforms to $Z_a - \Up_a$ according to~\eqref{Ztrans}.
Note the curvature of $\wt{\na}_a$ is $(dZ)_{ab} = 2\na_{[a}Z_{b]}$. Further, we shall denote 
exterior derivative twisted by $\wt{\na}_a$ by $\newd := d^{\wt{\na}}$,
\begin{equation} \label{newd}
\begin{split}
\newd: \cE_{[a_1 \ldots a_k]}[1] &\to \cE_{[a_0 \ldots a_k]}[1], \\
\tau_{a_1 \ldots a_k} &\mapsto
(k+1) \bigl( \na_{[a_0} \tau_{a_1 \ldots a_k]} + Z_{[a_0}  \tau_{a_1 \ldots a_k]} \bigr)
\end{split}
\end{equation}
for $\tau_{a_1 \ldots a_k} \in \cE_{[a_1 \ldots a_k]}[1]$ and $k \geq 0$.
By construction,  $\newd$ is conformally invariant.

Next, we shall need the operator 
\begin{equation} \label{D1}
\begin{split}
D_1: \cE_b[1] \to \cE_{(ab)_0}[1], \quad
\tau_b \mapsto \na_{(a} \tau_{b)_0} - Z_{(a}  \tau_{b)_0}
\end{split}
\end{equation}
for $\ta_b \in \cE_b[1]$.
Using a similar computation as above, one easily verifies conformal invariance of $D_1$. 
The role of the subscript in this notation will be clarified later.
Further, we denote by $C_1$ the composition of $E_1$ with the inversion $\ol{W}{}^{abcd}$,
\begin{equation} \label{C1}
\begin{split}
C_1: \cE_{(ab)_0}[1] \to \cE_{a}[1], \quad
\tau_{ab} \mapsto 2\ol{W}{}^{rst}{}_a \na_{[r} \tau_{s]t} 
\end{split}
\end{equation}
for $\tau_{ab} \in \cE_{(ab)_0}[1]$. This operator is invariant by construction. 

Operators $D_1$ and $C_1$ interact nicely with $E_0$ and $\newd$. Specifically
\begin{equation} \label{comm}
\begin{split}
D_1 \circ \newd &= E_0 + \Ph(Z): \cE[1] \to \cE_{(ab)_0}[1], \\
C_1 \circ E_0 &= \newd: \cE[1] \to \cE_a[1] ,
\end{split}
\end{equation}
where we consider $\Ph(Z)$ as an algebraic operator.
This follows from~\eqref{compinv}, \eqref{obstruct} and a direct computation. Using these
operators, we can also recover obstructions from~\eqref{obstruct},
\begin{equation} \label{obstructE}
\begin{split}
&(\newd \circ  C_1 \circ E_0) (\si) = (dZ)\si, \\
&(D_1 \circ C_1 \circ E_0) (\si) = \Ph(Z)\si + E_0(\si).
\end{split}
\end{equation}

We can also compose operators $C_1$ and $D_1$. It turns out this leads
to an auxiliary operator that can be considered as an analogue of $D_1$ where the source bundle is 
replaced by $\cE_{[bc]}[1]$ and the target bundle by $\cE_a \boxtimes \cE_{bc}[1] \subseteq \cE_{a[bc]}[1]$.
Recall $\boxtimes$ is the Cartan product, i.e.,\ $\cE_a \boxtimes \cE_{bc}[1]$ obeys symmetries of the Young diagram
with one column of the length $2$ and the second column of the length
$1$, and is traceless with respect to $\bg_{ab}$. This auxiliary operator is
\begin{equation} \label{Dbar}
\begin{split}
\ol{D}: \cE_{[bc]}[1] \to\cE_{a} \boxtimes \cE_{[bc]}[1], \quad
\tau_{bc} \mapsto \Pr{}_\boxtimes ( \na_a  - 2Z_a) \ta_{bc}
\end{split}
\end{equation}
for $\tau_{bc} \in \cE_{[bc]}[1]$. Here we use the notation $\Pr_\boxtimes$ in the same way as in 
Section~\ref{s.BGG}. A standard computation verifies $\ol{D}$ is conformally invariant.

Now a direct computation verifies that
\begin{align}
\notag
C_1 \!\circ\! D_1: \cE_a[1] &\to \cE_a[1], \\ \label{CDcomp}
\tau_a &\mapsto \tau_a - (H'_1 \newd\tau)_a - \ol{W}{}^{rsp}{}_a
\bigl[ 2 \Ph(Z)_{pr} \tau_s + \tfrac12 (dZ)_{rs} \tau_p \bigr] , \\
\notag
\text{with} ~
H'_1: \cE_{[ab]}[1] &\to \cE_a[1] , \\ \label{H'_1}
\psi_{ab} &\mapsto -\tfrac12 \ol{W}{}^{rsp}{}_a \ol{D}(\psi)_{prs} .
\end{align}
Conformal invariance of $\ol{D}$ means that also $H'_1$ is conformally invariant.

Finally note that all invariant operators $E_i$, $\wt{\na}$, $\newd$, $C_1$, $D_1$, $\ol{D}$, $H'_1$
are operators between bundles with all ``indices downstairs'' with the weight 1.
That is, these operators do not change the conformal weight.

\section{Compatibility operators} \label{sec:compat}

Below, in Section~\ref{sec:comp-background} we briefly recall the
precise definition of a compatibility complex
(Definition~\ref{def:compat}). Then, in
Section~\ref{sec:comp-construction} we recall the method given
in~\cite{kh-compat} of constructing a compatibility complex for a
differential operator that is uniformly of finite type
(Propositions~\ref{prp:compat-sufficient} and~\ref{prp:lift-compat}).
This method will later be used in the desired construction of a
compatibility complex for the conformal-to-Einstein operator
$E_0$~\eqref{E} (Section~\ref{sec:c2e-compat}) for `generic' conformal
structures. Finally, in Section~\ref{s.cc-flat} we sketch how this
method can also establish that a BGG sequence (such as~\eqref{BGGseq})
on a locally flat geometry constitutes a compatibility complex in the
sense defined here, which is an expected result that is sometimes taken
for granted, but is difficult to locate in the literature.

\subsection{Background} \label{sec:comp-background}

We start by introducing some basic notions from \emph{homological
algebra}~\cite{weibel}.

\begin{definition} \label{def:homalg}
A (possibly infinite) composable sequence $K_l$ of linear maps,
$l=l_{\min}, \ldots, l_{\max}$, such that
$K_{l+1} \circ K_l = 0$ when possible, is called a \emph{(cochain)
complex}. Given complexes $K_l$ and $K'_l$ a sequence $C_l$ of linear
maps, as in the diagram
\begin{equation}
\begin{tikzcd}[column sep=large,row sep=large]
	\cdots \ar{r} \&
	\bullet \ar{r}{K_{l-1}} \ar{d}{C_{l-1}} \&
	\bullet \ar{r}{K_l} \ar{d}{C_l} \&
	\bullet \ar{r}{K_{l+1}} \ar{d}{C_{l+1}} \&
	\bullet \ar{r} \ar{d}{C_{l+2}} \&
	\cdots
	\\
	\cdots \ar{r} \&
	\bullet \ar[swap]{r}{K'_{l-1}} \&
	\bullet \ar[swap]{r}{K'_l} \&
	\bullet \ar[swap]{r}{K'_{l+1}} \&
	\bullet \ar{r} \&
	\cdots
\end{tikzcd} ,
\end{equation}
such that its squares commute, that is $K'_l \circ C_l = C_{l+1} \circ
K_l$ when possible, is called a \emph{cochain map} or a \emph{morphism}
between complexes. A \emph{homotopy} between complexes $K_l$ and $K'_l$
(which could also be the same complex, $K_l = K'_l$) is a sequence of
morphism, as the dashed arrows in the diagram
\begin{equation}
\begin{tikzcd}[column sep=large,row sep=large]
	\cdots \ar{r} \&
	\bullet \ar{r}{K_{l-1}} \ar{d}{C_{l-1}} \&
	\bullet \ar{r}{K_l} \ar{d}{C_l} \ar[dashed]{dl}{H_{l-1}} \&
	\bullet \ar{r}{K_{l+1}} \ar{d}{C_{l+1}} \ar[dashed]{dl}{H_l} \&
	\bullet \ar{r} \ar{d}{C_{l+2}} \ar[dashed]{dl}{H_{l+1}}\&
	\cdots
	\\
	\cdots \ar{r} \&
	\bullet \ar[swap]{r}{K'_{l-1}} \&
	\bullet \ar[swap]{r}{K'_l} \&
	\bullet \ar[swap]{r}{K'_{l+1}} \&
	\bullet \ar{r} \&
	\cdots
\end{tikzcd} ,
\end{equation}
and the sequence of maps $C_l = K'_{l-1}\circ H_{l-1} + H_l\circ K_l$ is
said to be a \emph{morphism induced by} the homotopy $H_l$. An
\emph{equivalence up to homotopy} between complexes $K_l$ and $K'_l$ is
a pair of morphisms $C_l$ and $D_l$ between them, as in the diagram
\begin{equation}
\begin{tikzcd}[column sep=large,row sep=4.5em]
	\ar[loop left]{}{\tH_{l_{\min}-1}}
	\bullet \ar{r}{K_{l_{\min}}}
		\ar[swap,shift right]{d}{C_{l_{\min}}} \&
	\ar[r,phantom,"\cdots"]
		\ar[dashed,bend left]{l}{H_{l_{\min}}} \&
	\bullet \ar{r}{K_l}
		\ar[swap, shift right]{d}{C_l} \&
	\bullet \ar[r,phantom,"\cdots"]
		\ar[swap,shift right]{d}{C_{l+1}}
		\ar[dashed,bend left]{l}{H_l} \&
	\ar{r}{K_{l_{\max}}} \&
	\bullet \ar[swap,shift right]{d}{C_{l_{\max}+1}}
		\ar[dashed,bend left]{l}{H_{l_{\max}}}
		\ar[loop right]{}{\tH_{l_{\max}+1}}
	\\
	\ar[loop left]{}{\tH'_{l_{\min}-1}}
	\bullet \ar[swap]{r}{K'_{l_{\min}}}
		\ar[swap,shift right]{u}{D_{l_{\min}}} \&
	\ar[r,phantom,"\cdots"]
		\ar[swap,dashed,bend right]{l}{H'_{l_{\min}}} \&
	\bullet \ar[swap]{r}{K'_l}
		\ar[swap,shift right]{u}{D_l} \&
	\bullet \ar[r,phantom,"\cdots"]
		\ar[swap,shift right]{u}{D_{l+1}}
		\ar[swap,dashed,bend right]{l}{H'_l} \&
	\ar[swap]{r}{K'_{l_{\max}}} \&
	\bullet \ar[swap,shift right]{u}{D_{l_{\max}+1}}
		\ar[swap,dashed,bend right]{l}{H'_{l_{\max}}}
	\ar[loop right]{}{\tH'_{l_{\max}+1}}
\end{tikzcd} ,
\end{equation}
such that $C_l$ and $D_l$ are mutual inverses up to homotopy ($H_l$ and
$H'_l$), that is
\begin{equation} \label{eq:CD-equiv}
\begin{aligned}
	D_l \circ C_l &= \id - K_{l-1} \circ H_{l-1} - H_l \circ K_l ,
	\\
	C_l \circ D_l &= \id - K'_{l-1} \circ H'_{l-1} - H'_l \circ K'_l ,
\end{aligned}
\end{equation}
with the special end cases
\begin{equation} \label{eq:CD-equiv-edge}
\begin{aligned}
	D_{l_{\min}} \circ C_{l_{\min}}
	&= \id - \tH_{l_{\min}-1} - H_{l_{\min}} \circ K_{l_{\min}} , &
	K_{l_{\min}} \circ \tH_{l_{\min}-1} &= 0 ,
	\\
	C_{l_{\min}} \circ D_{l_{\min}}
	&= \id - \tH'_{l_{\min}-1} - H'_{l_{\min}} \circ K'_{l_{\min}} , &
	K'_{l_{\min}} \circ \tH'_{l_{\min}-1} &= 0 ,
	\\
	D_{l_{\max}+1} \circ C_{l_{\max}+1}
	&= \id - H_{l_{\max}} \circ K_{l_{\max}} - \tH_{l_{\max}+1} , &
	\tH_{l_{\max}+1} \circ K_{l_{\max}}  &= 0 ,
	\\
	C_{l_{\max}+1} \circ D_{l_{\max}+1}
	&= \id - H'_{l_{\max}} \circ K'_{l_{\max}} - \tH'_{l_{\max}+1} , &
	\tH'_{l_{\max}+1} \circ K'_{l_{\max}}  &= 0 ,
\end{aligned}
\end{equation}
where the $\tH$ maps are allowed to be arbitrary, as long as they
satisfy the given identities. The $\tH$ operators may be omitted from
the data, since they can be defined to satisfy to the left column
identities in~\eqref{eq:CD-equiv-edge}, as long as they satisfy the
identities in the left column of~\eqref{eq:CD-equiv-edge}.
\end{definition}

Note that our definition of equivalence up to homotopy between complexes
of finite length is set up in a way that allows an equivalence between
longer complexes to be truncated and still remain an equivalence.

Next, we restrict our attention to the case where all maps are given by
differential operators.

\begin{definition}[{cf.~\cite[Def.10.5.4]{seiler-inv}, \cite[Defs.1.2.2--4]{tarkhanov}}] \label{def:compat}
Given a differential operator $K$, any composable differential operator
$L$ such that $L\circ K = 0$ is a \emph{(partial) compatibility
operator} for $K$. If $K_1$ is a compatibility operator for $K$, it is
called \emph{complete} or \emph{universal} when any other compatibility
operator $L$ can be factored through $L = L'\circ K_1$ for some
differential operator $L'$. A complex of differential operators $K_l$,
$l=0,1,\ldots$ is called a compatibility complex for $K$ when $K_0 = K$
and, for each $l\ge 1$, $K_l$ is a complete compatibility operator for
$K_{l-1}$.
\end{definition}

Our usage of attributes like \emph{partial}, \emph{complete} or
\emph{universal} is not standard. In the literature, when no attribute
is used, the completeness condition is
implied~\cite[Def.1.2.2]{tarkhanov}, while a partial compatibility
operator does not seem to have a standard name.

\subsection{Sufficient conditions and construction} \label{sec:comp-construction}

We can make two related remarks about the properties of compatibility operators.
A complete compatibility operator, say $K_1$, need not be
unique. But, by its universal factorization property, any two
compatibility operators, say $K_1$ and $K'_1$, must factor through each
other, $K_1 = L_1 \circ K'_1$ and $K'_1 = L'_1\circ K_1$ for some
differential operators $L_1$ and $L'_1$. On the other hand, while the
compatibility condition $K_1 \circ K = 0$ is easy to check, it may be
quite challenging to verify the completeness/universality property of
any particular $K_1$. However, of both $K$ and $K_1$ may be reduced to
some other pair of operators, say $K'$ and $K'_1$, where the 
compatibility and completeness conditions are known to be satisfied,
this reduction may serve as a witness to the completeness property of
$K_1$. Both of these remarks motivate the utility of the following

\begin{proposition}[{\cite[Lem.4]{kh-compat}}] \label{prp:compat-sufficient}
Consider two complexes of differential operators $\Kop_l$ and $\Kop_l'$, for
$l=0,1,\cdots,n-1$, and an equivalence up to homotopy between them, as
in the diagram
\begin{equation} \label{cc-equiv}
\begin{tikzcd}[column sep=2cm,row sep=2cm]
	\bullet \ar{r}{\Kop_0}
		\ar[swap,shift right]{d}{\Cop_0} \&
		\ar[dashed,bend left]{l}{\Hop_0}
	\bullet \ar{r}{\Kop_1}
		\ar[swap, shift right]{d}{\Cop_1} \&
	\bullet \ar[r,phantom,"\cdots"]
		\ar[swap,shift right]{d}{\Cop_2}
		\ar[dashed,bend left]{l}{\Hop_1} \&
	\bullet
		\ar[swap,shift right]{d}{\Cop_{n-1}}
		\ar{r}{\Kop_{n-1}} \&
	\bullet \ar[swap,shift right]{d}{\Cop_n}
		\ar[dashed,bend left]{l}{\Hop_{n-1}}
	\\
	\bullet \ar[swap]{r}{\Kop'_0}
		\ar[swap,shift right]{u}{\Dop_0} \&
		\ar[swap,dashed,bend right]{l}{\Hop'_0}
	\bullet \ar[swap]{r}{\Kop'_1}
		\ar[swap,shift right]{u}{\Dop_1} \&
	\bullet \ar[r,phantom,"\cdots"]
		\ar[swap,shift right]{u}{\Dop_2}
		\ar[swap,dashed,bend right]{l}{\Hop'_1} \&
	\bullet
		\ar[swap,shift right]{u}{\Dop_{n-1}}
		\ar[swap]{r}{\Kop'_{n-1}} \&
	\bullet \ar[swap,shift right]{u}{\Dop_n}
		\ar[swap,dashed,bend right]{l}{\Hop'_{n-1}}
\end{tikzcd} ,
\end{equation}
where for simplicity we are assuming that the $\tilde{\Hop}_{-1}$,
$\tilde{\Hop}'_{-1}$, $\tilde{\Hop}_n$, $\tilde{\Hop}'_n$ are all zero.
If $\Kop'_l$ is a compatibility complex for $\Kop'_0$, then $\Kop_l$ is
a compatibility complex for $\Kop_0$. 
\end{proposition}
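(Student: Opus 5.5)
The argument is a diagram chase through the homotopy identities, one level $l$ at a time. By Definition~\ref{def:compat}, two things must be shown for each $l\ge 1$. First, $\Kop_l\circ\Kop_{l-1}=0$, which holds because $\Kop_l$ is assumed to be a complex. Second, completeness: if $L$ is any differential operator with $L\circ \Kop_{l-1}=0$, then $L=L'\circ\Kop_l$ for some differential operator $L'$. Everything we use is algebraic in the operators: compositions of differential operators are differential operators, the morphism identities $\Kop'_l\Cop_l=\Cop_{l+1}\Kop_l$ and $\Kop_l\Dop_l=\Dop_{l+1}\Kop'_l$ hold, and so do the homotopy identities \eqref{eq:CD-equiv}. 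At the ends these identities take the simplified form \eqref{eq:CD-equiv-edge}, with the $\tH$ terms set to zero.

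Fix $l\ge1$ and take $L$ with $L\Kop_{l-1}=0$. The plan is to transport $L$ to the primed side, factor it there, and transport back. Set $L'' := L\circ\Dop_l$. Then
\[
L''\circ\Kop'_{l-1}=L\,\Dop_l\Kop'_{l-1}=L\,\Kop_{l-1}\Dop_{l-1}=0 ,
\]
so $L''$ is a compatibility operator for $\Kop'_{l-1}$. Since $\Kop'_l$ is complete, $L''=M\circ\Kop'_l$ for some differential operator $M$. Next use the first identity of \eqref{eq:CD-equiv} on the unprimed side, $\Dop_l\Cop_l=\id-\Kop_{l-1}\Hop_{l-1}-\Hop_l\Kop_l$. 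Multiplying by $L$ on the left and using $L\Kop_{l-1}=0$ gives
\[
L = L\Dop_l\Cop_l + L\Hop_l\Kop_l = M\Kop'_l\Cop_l + L\Hop_l\Kop_l = (M\Cop_{l+1}+L\Hop_l)\circ\Kop_l ,
\]
where the last step uses the morphism property $\Kop'_l\Cop_l=\Cop_{l+1}\Kop_l$. So $L'=M\Cop_{l+1}+L\Hop_l$ is the required differential operator.

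The only point needing care is the index range. For $l=1$ the identity at level $0$ is the edge case: $\Dop_0\Cop_0=\id-\Hop_0\Kop_0$ with $\tH_{-1}=0$, but the argument above only uses the identity at level $l\ge1$, so the edge case is not needed there. At the top end, $l=n-1$, the identity at level $n-1$ involves $\Hop_{n-1}$, which exists, and the morphism square $\Kop'_{n-1}\Cop_{n-1}=\Cop_n\Kop_{n-1}$ is available. The edge terms $\tH_n$ would only enter at level $n$, which is never required. So the chase works uniformly for $1\le l\le n-1$. The main thing to double-check is that completeness of $\Kop'_l$ is invoked only at levels where the primed complex has been assumed to be a compatibility complex. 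Note that $\Kop'_0$ itself need not be related to $\Kop_0$ beyond the equivalence data. I expect no genuine obstacle: the argument uses only one of the two homotopy identities and the morphism property of $\Cop$ and $\Dop$.
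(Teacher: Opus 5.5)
Your argument is correct and is essentially the standard diagram chase behind the cited \cite[Lem.4]{kh-compat}; the paper itself gives no proof and only cites it. The chase pulls $L$ back along $\Dop_l$, factors it through $\Kop'_l$ by completeness, and transports back using $\Dop_l\Cop_l=\id-\Kop_{l-1}\Hop_{l-1}-\Hop_l\Kop_l$ and $\Kop'_l\Cop_l=\Cop_{l+1}\Kop_l$, giving $L'=M\Cop_{l+1}+L\Hop_l$. You are also right that the edge identities, and the vanishing of the $\tilde{\Hop}$ terms, are never needed for $1\le l\le n-1$.
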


The observation that is at the core of our method of explicitly
constructing compatibility operators, together with a witness to their
completeness property is the following

\begin{proposition}[{cf.~\cite[Lem.5]{kh-compat}, \cite[Prp.1.2.7]{tarkhanov}}] \label{prp:lift-compat}
Consider the following diagram of differential operators:
\begin{equation} \label{eq:lifted-compat}
\begin{tikzcd}[column sep=8em,row sep=7em]
	\bullet \ar{r}{K_0}
		\ar[swap,shift right]{d}{C_0} \&
	\bullet \ar{r}{K_1
		= \begin{bmatrix}
			\id - K_0\circ H_0 - D_1\circ C_1 \\
			K'_1 \circ C_1\end{bmatrix}}
		\ar[swap,shift right]{d}{C_1}
		\ar[dashed,bend left]{l}{H_0} \&
	\bullet
		\ar[swap,shift right]{d}{C_2
			= \begin{bmatrix}
				0 & \id\end{bmatrix}}
		\ar[dashed,bend left]{l}{H_1
			= \begin{bmatrix}
				\id & 0\end{bmatrix}}
	\\
	\bullet \ar[swap]{r}{K'_0}
		\ar[swap,shift right]{u}{D_0} \&
	\bullet \ar[swap]{r}{K'_1}
		\ar[swap,shift right]{u}{D_1}
		\ar[swap,dashed,bend right]{l}{H'_0} \&
	\bullet
		\ar[swap,shift right]{u}{D_2
			= \begin{bmatrix}
				D'_2 \\ \id - K'_1\circ H'_1\end{bmatrix}}
		\ar[swap,dashed,bend right]{l}{H'_1}
\end{tikzcd} .
\end{equation}
If its truncation to the first square is an equivalence up to homotopy
in the sense of Definition~\ref{def:homalg} (of length 1) and the
operator $K'_1$ is known to be a complete compatibility operator for
$K'$, then there exist differential operators $H'_1$ and $D'_2$ such
that the second square in the above diagram completes it to an
equivalence up to homotopy (of length 2).
\end{proposition}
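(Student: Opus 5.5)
The plan is to construct $H'_1$ and $D'_2$ by two applications of the universal factorization property of $K'_1$ (Definition~\ref{def:compat}). After that, every remaining identity of Definition~\ref{def:homalg} for the length-2 diagram should reduce to the length-1 identities by direct substitution.

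First I would read off the hypotheses. The truncation to the first square is an equivalence up to homotopy, so $K'_0 C_0 = C_1 K_0$ and $K_0 D_0 = D_1 K'_0$. Moreover $D_0C_0 = \id - \tH_{-1} - H_0K_0$ and $C_0D_0 = \id - \tH'_{-1} - H'_0K'_0$, with $K_0\tH_{-1}=0$ and $K'_0\tH'_{-1}=0$. At the right end we have $D_1C_1 = \id - K_0H_0 - \tH_1$ and $C_1D_1 = \id - K'_0H'_0 - \tH'_1$, with $\tH_1K_0 = 0$ and $\tH'_1K'_0=0$.

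Since $\tH'_1\circ K'_0 = 0$ and $K'_1$ is a complete compatibility operator for $K'_0$, there is a differential operator $H'_1$ with $\tH'_1 = H'_1\circ K'_1$. This gives the required level-1 identity $C_1D_1 = \id - K'_0H'_0 - H'_1K'_1$. The identity $D_1C_1 = \id - K_0H_0 - H_1K_1$ holds by the very definition of the first component of $K_1$, because $H_1 = [\id\ \ 0]$.

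Next I would check the complex and morphism properties.
\begin{itemize}
\item $K_1K_0=0$. In the first component, use $D_1C_1K_0 = D_1K'_0C_0 = K_0D_0C_0$. This turns $K_0 - K_0H_0K_0 - D_1C_1K_0$ into $K_0(\id - H_0K_0 - D_0C_0) = K_0\tH_{-1} = 0$. In the second component, $K'_1C_1K_0 = K'_1K'_0C_0 = 0$.
\item $C_2K_1 = K'_1C_1$. This is immediate from $C_2 = [0\ \ \id]$.
\item $K_1D_1 = D_2K'_1$. This is the main step and determines $D'_2$. For the second component, $K'_1C_1D_1 = K'_1 - K'_1K'_0H'_0 - K'_1H'_1K'_1 = (\id - K'_1H'_1)K'_1$, as required. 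For the first component, substituting $C_1D_1$ gives
\[
D_1 - K_0H_0D_1 - D_1C_1D_1 = L + D_1H'_1K'_1, \qquad L := K_0D_0H'_0 - K_0H_0D_1 .
\]
\end{itemize}

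The expected obstacle is showing that $L$ factors through $K'_1$. By completeness of $K'_1$, it suffices to show $L\circ K'_0 = 0$. I would compute
\[
L K'_0 = K_0\bigl(D_0H'_0K'_0 - H_0K_0D_0\bigr) = K_0\bigl(D_0(\id - C_0D_0 - \tH'_{-1}) - (\id - D_0C_0 - \tH_{-1})D_0\bigr) = -K_0D_0\tH'_{-1} + K_0\tH_{-1}D_0 ,
\]
using $H_0K_0D_0 = H_0D_1K'_0$ along the way. The term $K_0\tH_{-1}D_0$ vanishes because $K_0\tH_{-1}=0$. For the other, $K_0D_0\tH'_{-1} = D_1K'_0\tH'_{-1} = 0$. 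Hence $L = L'\circ K'_1$ for some differential operator $L'$, and I set $D'_2 := L' + D_1H'_1$, so that the first component equals $D'_2K'_1$.

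Finally I would verify the end-case identities at level 2.
\begin{itemize}
\item $C_2D_2 = \id - K'_1H'_1$. This holds exactly, so $\tH'_2 = 0$.
\item $D_2C_2 = \id - K_1H_1 - \tH_2$. I would define $\tH_2$ by this formula and check that $\tH_2K_1 = 0$, i.e.\ $(\id - D_2C_2)K_1 = K_1H_1K_1$. Using $C_2K_1 = K'_1C_1$ and $D_2K'_1 = K_1D_1$, the left side is $K_1 - K_1D_1C_1$. The right side is $K_1(\id - K_0H_0 - D_1C_1)$, and these agree because $K_1K_0=0$.
\end{itemize}
If one insists on $\tH_2=0$, this is where extra care would be needed. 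The statement only asks for an equivalence up to homotopy in the sense of Definition~\ref{def:homalg}, so it is not required.
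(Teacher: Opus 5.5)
Your proof is correct and follows the standard argument behind the cited \cite[Lem.5]{kh-compat} (the paper itself only cites this and gives no proof). Completeness of $K'_1$ is used once to factor $\tH'_1 = \id - C_1D_1 - K'_0H'_0$ as $H'_1K'_1$, and once more to factor the leftover term $K_0(D_0H'_0 - H_0D_1)$, which you correctly showed is annihilated by $K'_0$, to obtain $D'_2$; all remaining identities then follow by substitution. At the right end you rightly used the type-consistent identity $D_{l_{\max}+1}C_{l_{\max}+1} = \id - K_{l_{\max}}H_{l_{\max}} - \tH_{l_{\max}+1}$, whereas the paper's display \eqref{eq:CD-equiv-edge} prints the composition $H_{l_{\max}}\circ K_{l_{\max}}$ in the wrong order.
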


A consequence of this observation is that, by
Proposition~\ref{prp:compat-sufficient} the operator $K_1$
in~\eqref{eq:lifted-compat} is witnessed to be also be a complete
compatibility operator for $K$. In practice, this $K_1$ operator can
contain quite a lot of redundant information, so some simplification by
hand may be required to bring it into a more convenient form. As long as
the simplifications keep track of the witnessing equivalence data, then
the simplified $K_1$ is still easily verified to be complete.

\subsection{BGG sequence as a compatibility complex} \label{s.cc-flat}

Here we sketch an argument for the result that
BGG complexes provide compatibility complexes 
(of first BGG operators) on locally flat parabolic geometries.
Here we sketch how this follows from Proposition~\ref{prp:compat-sufficient}. 
A version of this argument appears in~\cite[Sec.4]{celm-lorentz}, in the
specific case of the Calabi complex, the BGG sequence for the Killing
operator.
We assume references~\cite{CSS,CD} as background material for this section.

A BGG complex is parametrized by choice a tractor bundle $\cV$. Some
representation theoretic data then builds its operators, which we can
generically call $E_l$, and supplies all the data that produces an
instance of an equivalence up to homotopy, as illustrated
in~\eqref{cc-equiv}, with a reference compatibility complex. We
illustrate a generic square of the resulting diagram, which can be
referred to in the rest of the discussion:
\begin{equation} \label{bgg-equiv}
\begin{tikzcd}[column sep=3cm,row sep=3cm]
	V_l \ar{r}{E_l = \pi_{l+1}d^{\ol{\na}} \Pi_l}
		\ar[swap, shift right]{d}{\Pi_l} \&
	V_{l+1}
		\ar[swap,shift right]{d}{\Pi_{l+1}}
		\ar[dashed,bend left]{l}{0}
	\\
	V_l' \ar[swap]{r}{d^{\ol{\na}}}
		\ar[swap,shift right]{u}{\pi_l P'_l(d^{\ol{\na}} \partial^*)} \&
	V_{l+1}'
		\ar[swap,shift right]{u}{\pi_{l+1} P'_{l+1}(d^{\ol{\na}} \partial^*)}
		\ar[swap,dashed,bend right]{l}{-\partial^* Q_{l}(d^{\ol{\na}} \partial^*)}
\end{tikzcd} .
\end{equation}
The lower level of~\eqref{cc-equiv}
is given by bundles  $V_l' := \La^l T^*M \otimes \cV$ together with 
$K_l' = d^{\ol{\na}}: \Ga(V_l') \to \Ga(V_{l+1}')$  where $\ol{\na}$
is the flat tractor connection on $\cV$. 

Lie algebraic data for parabolic geometries give rise to the Kostant's codifferential and its bundle version
$\partial^*: V_l' \to V_{l-1}'$. We put  $V_l := \ker \partial^*/ \im \partial^*$ in $V_l'$. 
These are the bundles in the upper level of~\eqref{cc-equiv}.
By construction, there is a projection $\pi_l: \ker \partial^*|_{V_l'} \to V_l$ and, moreover, there is
a differential splitting $\Pi_l: \Ga(V_l) \to \Ga(\ker \partial^*|_{V_l'})$ which is uniquely
characterized by conditions $\partial^* \Pi_l=0$, $\partial^* d^{\ol{\na}} \Pi_l=0$ and 
$\pi_l \circ \Pi_l = \id_{V_l}$. The interaction of these tools gives rise to the calculus known
as ``BGG machinery''~\cite{CSS,CD}. 
In particular, this yields BGG operators $K_l = E_l$ in the upper level of~\eqref{cc-equiv} in the form 
$E_l = \pi_{l+1}d^{\ol{\na}} \Pi_l$.

The last ingredient we need is the differential operator 
$\Box = \partial^* d^{\ol{\na}} + d^{\ol{\na}} \partial^*: \Ga(V_l') \to \Ga(V_l')$.
Note that $\Box$ reduces to $\partial^* d^{\na}$ on sections of $\im \partial^*$ and to 
$d^{\ol{\na}} \partial^*$ on sections of associated graded bundle of  $V_l'/\ker \partial^*$.
Since $\Box$ is closely related~\cite{CD,CS} to the so called Kostant's Laplacian~\cite{Kostant}, 
it follows from representation theory that we have polynomials
$P_l$ and $P'_l$, both with constant term equal to $1$, such that corresponding polynomial operators satisfy
\begin{align*}
P_l(\partial^* d^{\na}) |_{\Ga(\im \partial^*)} &= 0, &
P'_l(d^{\ol{\na}} \partial^*): \Ga(V_l') &\to \Ga(\ker \partial^*), \\
P_l(\partial^* d^{\na}) P'_l(d^{\ol{\na}} \partial^*) |_{\im \Pi} &= \id, &
P'_l &= P_{l-1}.
\end{align*}

Now we can specify all missing operators in~\eqref{cc-equiv}. We put $C_l := \Pi_l$,  
$D_l := \pi_l P'_l(d^{\ol{\na}} \partial^*)$ and $H_l=0$. Further, using polynomials
$Q_l(x)$ given by 
$P_l(x) = xQ_l(x)+1$ and properties in the previous display, we have
\begin{align*}
P_l(\partial^* d^{\ol{\na}}) & P'_l(d^{\ol{\na}} \partial^*) = 
\bigl( d^{\ol{\na}} \partial^* Q_{l-1}(d^{\ol{\na}} \partial^*) + \id \bigr) 
\bigl( Q_l(\partial^* d^{\ol{\na}})\partial^* d^{\ol{\na}} + \id \bigr)   = \\
&= \id + d^{\ol{\na}} \partial^* Q_{l-1}(d^{\ol{\na}} \partial^*) 
+  \partial^* Q_l( d^{\ol{\na}} \partial^*) d^{\ol{\na}} = C_lD_l.
\end{align*}
Now we put $H'_l = -\partial^* Q_{l}(d^{\ol{\na}} \partial^*)$ and
the identities~\eqref{eq:CD-equiv} are satisfied, which verifies equivalence of the BGG complex $E_l$ and 
the twisted de-Rham complex $d^{\ol{\na}}$ are equivalent  up to homotopy, hence $E_l$ is a compatibility complex
for $E_0$.

\section{Compatibility complex for C2E operator} \label{sec:c2e-compat}

Assume the conformal structure $(M,[g_{ab}])$ of dimension $n \geq 4$ is generic in the sense of~\eqref{generic}.
We shall comment upon the dimension $3$ in Remark~\ref{lowdim}.
Then the solution space of $E(\si)=0$ has either dimension $1$ or $0$. We shall discuss these two cases separately.
Recall 1-dimensional solution space is characterized by the property~\eqref{obstruct}, see 
Section~\ref{s.generic}.

\subsection{No non-trivial solution} \label{sec:nosol}
Following~\eqref{obstruct}, this case is characterized by
\[
	\begin{pmatrix}
		(dZ)_{ab} \\
		\Ph(Z)_{ab}
	\end{pmatrix}
	\ne 0 ,
	\quad \text{equivalently} \quad
	\begin{pmatrix}
		\zeta^{ab} & \eta^{ab}
	\end{pmatrix}
	\begin{pmatrix}
		(d Z)_{ab} \\
		\Ph(Z)_{ab}
	\end{pmatrix}
	= 1 ,
\]
for some tensor fields $\zeta^{ab} \in \cE^{[ab]}$ and $\eta^{ab} \in \cE^{(ab)_0}$. 
Then there exists a left inverse $H_0: \cE_{(ab)_0}[1] \to \cE[1]$ of $E=E_0$,
\begin{equation} \label{eq:E0-invert}
\begin{aligned}
	&H_0 \circ E_0 = \id_{\cE[1]} , \quad \text{with} \\
	&H_0(\tau) :=
	\begin{pmatrix}
		\zeta^{ab} & \eta^{ab}
	\end{pmatrix}
	\begin{pmatrix}
		(\newd \circ C_1 (\tau))_{ab} \\
		((D_1 \circ C_1 - \id) (\tau))_{ab}
	\end{pmatrix}
\end{aligned}
\end{equation}
for $\tau_{ab} \in \cE_{(ab)_0}[1]$, cf.~\eqref{obstructE}.

If we blindly apply the successive construction of the compatibility
operators from Proposition~\ref{prp:lift-compat}, we end up with the
following equivalence diagram, which repeats the last two squares
indefinitely and hence has infinite length:
\begin{equation} \label{eq:generic-nosol-equiv-inf}
\begin{tikzcd}[column sep=3cm,row sep=3cm]
	\bullet \ar{r}{E_0}
		\ar[swap,shift right]{d}{0} \&
	\bullet \ar{r}{\id_{\cE_{(ab)_0}[1]} - E_0 H_0}
		\ar[swap,shift right]{d}{0}
		\ar[dashed,bend left]{l}{H_0}
		\&
	\bullet \ar{r}{E_0 H_0}
		\ar[swap,shift right]{d}{0}
		\ar[dashed,bend left]{l}{\id_{\cE_{(ab)_0}[1]}}
		\&
	\bullet \rlap{ $\cdots$}
		\ar[swap,shift right]{d}{0}
		\ar[dashed,bend left]{l}{\id_{\cE_{(ab)_0}[1]}}
	\\
	0 \ar[swap]{r}{0}
		\ar[swap,shift right]{u}{0} \&
	0 \ar[swap]{r}{0}
		\ar[swap,shift right]{u}{0}
		\ar[swap,dashed,bend right]{l}{0}
		\&
	0 \ar[swap]{r}{0}
		\ar[swap,shift right]{u}{0}
		\ar[swap,dashed,bend right]{l}{0}
		\&
	0  \rlap{ $\cdots$}
		\ar[swap,shift right]{u}{0}
		\ar[swap,dashed,bend right]{l}{0}
\end{tikzcd}
\end{equation}
However, it is easy to notice that the second compatibility operator
$E_0 H_0$ contains many redundant components, since by
using~\eqref{eq:E0-invert} we can factor
\[
	H_0 = H_0 (E_0 H_0) = (H_0 E_0) H_0.
\]
It is straightforward to rewrite the third equivalence square
in~\eqref{eq:generic-nosol-equiv-inf} using $H_0$ instead of $E_0 H_0$.
Then, continuing to apply the construction from
Proposition~\eqref{prp:lift-compat} we arrive at a much simplified
equivalence diagram, now of length 3:
\begin{equation} \label{eq:generic-nosol-equiv}
\begin{tikzcd}[column sep=3cm,row sep=3cm]
	\bullet \ar{r}{E_0}
		\ar[swap,shift right]{d}{0} \&
	\bullet \ar{r}{\id - E_0 H_0}
		\ar[swap,shift right]{d}{0}
		\ar[dashed,bend left]{l}{H_0}
		\&
	\bullet \ar{r}{H_0}
		\ar[swap,shift right]{d}{0}
		\ar[dashed,bend left]{l}{\id}
		\&
	\bullet
		\ar[swap,shift right]{d}{0}
		\ar[dashed,bend left]{l}{E_0}
	\\
	0 \ar[swap]{r}{0}
		\ar[swap,shift right]{u}{0} \&
	0 \ar[swap]{r}{0}
		\ar[swap,shift right]{u}{0}
		\ar[swap,dashed,bend right]{l}{0}
		\&
	0 \ar[swap]{r}{0}
		\ar[swap,shift right]{u}{0}
		\ar[swap,dashed,bend right]{l}{0}
		\&
	0
		\ar[swap,shift right]{u}{0}
		\ar[swap,dashed,bend right]{l}{0}
\end{tikzcd}
\end{equation}
We can encapsulate the result of this calculation in the following rather generic
\begin{lemma} \label{lem:nosol-equiv}
Given a differential operator $E_0$, provided there exists a
left-inverse differential operator $H_0$, such that
\begin{equation} \label{eq:H0-nosol}
	H_0 \circ E_0 = \id ,
\end{equation}
then the top line of~\eqref{eq:generic-nosol-equiv} is a full
compatibility complex for $E_0$ of length 3.
\end{lemma}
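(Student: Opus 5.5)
The plan is to apply Proposition~\ref{prp:compat-sufficient} to the diagram~\eqref{eq:generic-nosol-equiv}. The bottom row is the zero complex $0\to0\to0\to0$, which is trivially a compatibility complex of the zero operator. It then remains to check two things: that the top row $E_0$, $\id - E_0H_0$, $H_0$ is a complex, and that the identities~\eqref{eq:CD-equiv} hold with all $C_l=D_l=0$ and $H'_l=0$. In that case the conditions reduce to the statement that the homotopies $H_0$, $\id$, $E_0$ form a contracting homotopy of the top row, namely
\[
\id = H_0E_0,\qquad \id = E_0H_0 + \id\circ(\id-E_0H_0),\qquad \id = (\id-E_0H_0)\circ\id + E_0H_0,\qquad \id = H_0E_0,
\]
at the four nodes, read from left to right.

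First I check the complex property using only~\eqref{eq:H0-nosol}:
\[
(\id - E_0H_0)E_0 = E_0 - E_0(H_0E_0) = 0,\qquad H_0(\id - E_0H_0) = H_0 - (H_0E_0)H_0 = 0 .
\]
Next I verify the homotopy identities node by node.
\begin{itemize}
\item At the first node, $D_0C_0 = 0 = \id - H_0E_0$, with $\tH_{-1}=0$ as Proposition~\ref{prp:compat-sufficient} requires.
\item At the second node, $0=\id - E_0H_0 - \id\circ(\id-E_0H_0)$.
\item At the third node, the homotopy into it is $\id$ and the one out of it is $E_0$, so $0=\id-(\id-E_0H_0)\circ\id - E_0\circ H_0$.
\item At the last node, $0=\id - H_0\circ E_0$, again with $\tH_3=0$.
\end{itemize}
The bottom-row identities are all $0=0$, since every bundle there is zero. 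All the maps involved are differential operators, because $H_0$ is.

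With these identities in hand, Proposition~\ref{prp:compat-sufficient} gives that the top row is a compatibility complex for $E_0$. This means each operator is a complete compatibility operator for the previous one, and it does not require the complex to continue past length 3.

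The only delicate point is bookkeeping at the endpoints. The proposition as stated assumes that the edge maps $\tH$ vanish, so I must make sure the final node is closed off by the homotopy $E_0$ and not by the identity. This is why $H_0$, rather than the redundant $E_0H_0$ from~\eqref{eq:generic-nosol-equiv-inf}, is the last operator: it makes the last identity $H_0E_0=\id$ hold exactly.

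If one prefers to bypass the proposition, completeness can also be seen directly from the contracting homotopy. Suppose $L\circ E_0=0$. Then $L=L(\id-E_0H_0)$, so $L$ factors through the second operator. Suppose instead $L(\id-E_0H_0)=0$. Then $L=LE_0H_0$, which factors through $H_0$. In both cases the required $L'$ is a differential operator, which confirms completeness.
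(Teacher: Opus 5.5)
Your proposal is correct and follows the paper's proof: the identity $H_0E_0=\id$ yields both the complex property and the contracting-homotopy identities, making the diagram an equivalence with the zero complex, and Proposition~\ref{prp:compat-sufficient} then applies. The paper justifies truncating at length 3 by noting that the next compatibility operator is $0$. Your bookkeeping $\tH_3=0$ encodes the same fact, but your direct argument should state the missing case explicitly: if $L\circ H_0=0$, then $L=L\circ H_0\circ E_0=0$.
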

\begin{proof}
The identity~\eqref{eq:H0-nosol} is sufficient to verify that the data
in diagram~\eqref{eq:generic-nosol-equiv} constitutes an equivalence of
complexes up to homotopy, with the bottom line trivially being a
compatibility complex. Therefore, by
Proposition~\ref{prp:compat-sufficient}, the top line is also a
compatibility complex. By inspection, the construction of
Proposition~\ref{prp:lift-compat} yields $0$ as the next compatibility
operator. Hence the diagram can be truncated at length 3.
\end{proof}

\subsection{One dimensional solution space} \label{sec:onesol}
This case is characterized by $dZ=0$ and $\Ph(Z)=0$, see~\eqref{obstruct} and discussion below.
We assume $n \geq 4$ as a one dimensional solution space is not possible for $n=3$. In the latter dimension, 
existence of an Einstein metric in $[g_{ab}]$ means the Cotton tensor is zero hence $[g_{ab}]$ is conformally flat.

It turns out, we shall need a rather specific case of Proposition~\ref{prp:lift-compat} here: 
it is sufficient to assume $C_0=D_0=\id$ and $H_0 = H_0'=0$ in the first square of~\eqref{eq:lifted-compat}. Then
one can follow Proposition~\ref{prp:lift-compat} to obtain a compatibility complex. However -- similar to
Section~\ref{sec:nosol} -- this can be heavily simplified. Using  a small number of key identities, we obtain 
the following general result:

\begin{lemma} \label{lem:onesol-equiv}
Consider the following diagram of differential operators:
\begin{equation} \label{eq:onesol-equiv}
\begin{tikzcd}[column sep=2.cm,row sep=3cm]
	V_0 \ar{r}{K_0}
		\ar[swap,shift right]{d}{\id}
		\&
	V_1 \ar[dashed,bend left]{l}{0}
		\ar[swap,shift right]{d}{C_1}
		\ar{r}{\left( \begin{smallmatrix} \id - D_1 C_1 \\ K'_1 C_1 \end{smallmatrix} \right)}
		\&
	\begin{smallmatrix} V_1 \\ \oplus \\ V'_2 \end{smallmatrix}
		\ar{r}{\left(\begin{smallmatrix} C_1 & -H'_1 \\ 0 & K'_2 \end{smallmatrix}\right)}
		\ar[swap,shift right]{d}{\left(\begin{smallmatrix} 0 & \id \end{smallmatrix}\right)}
		\ar[dashed,bend left]{l}{\left(\begin{smallmatrix} \id & 0 \end{smallmatrix} \right)}
		\&	
	\begin{smallmatrix} V'_1 \\ \oplus \\ V'_3 \end{smallmatrix}
		\ar{r}{\left(\begin{smallmatrix} 0 & K'_3 \end{smallmatrix}\right)}
		\ar[dashed,bend left]{l}{\left(\begin{smallmatrix} D_1 & 0 \\ -K'_1 & 0 \end{smallmatrix} \right)}
		\ar[swap,shift right,pos=0.7]{d}{\left(\begin{smallmatrix} 0 & \id \end{smallmatrix}\right)}
		\&
	V'_4 ~ \rlap{$\cdots$}
		\ar[dashed,bend left]{l}{0}
		\ar[swap,shift right]{d}{\id}
	\\
	V_0 \ar[swap,shift right]{r}{K'_0}
		\ar[swap,shift right]{u}{\id} 
		\&
	V'_1 \ar[swap,shift right]{r}{K'_1}
		\ar[swap,shift right]{u}{D_1}
		\ar[swap,dashed,bend right]{l}{0}
		\&
	V'_2 \ar[swap,shift right]{r}{K'_2}
		\ar[swap,dashed,bend right]{l}{H'_1}
		\ar[swap,shift right]{u}{\left( \begin{smallmatrix} D_1 H'_1 \\ \id - K'_1 H'_1 \end{smallmatrix} \right)}
		\&
	V'_3 \ar[swap,shift right]{r}{K'_2}
		\ar[swap,shift right,pos=0.3]{u}{\left(\begin{smallmatrix} 0 \\ \id \end{smallmatrix}\right)} 
		\ar[swap,dashed,bend right]{l}{0}
		\&
	V'_4 ~ \rlap{$\cdots$}
		\ar[swap,shift right]{u}{\id} 
		\ar[swap,dashed,bend right]{l}{0}
\end{tikzcd}
\end{equation}
continued to the right by $C_i = D_i = \id$ and $H_i = H'_i = 0$.
Provided that the bottom row is a full compatibility complex, its
truncation to the first cell is an equivalence up to homotopy and
moreover $H'_1$ is an operator satisfying the identity
\begin{equation} \label{eq:onesol-H1'}
	\id - C_1 \circ D_1 = H'_1 \circ K_1' ,
\end{equation}
then the diagram in~\eqref{eq:onesol-equiv} is an equivalence up to
homotopy and its top row is a full compatibility complex.
\end{lemma}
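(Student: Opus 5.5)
The plan is to verify directly that the data in~\eqref{eq:onesol-equiv} satisfy all the identities of an equivalence up to homotopy in the sense of Definition~\ref{def:homalg}. Proposition~\ref{prp:compat-sufficient} then transfers the compatibility-complex property from the bottom row to the top row. The first-cell hypothesis, with $C_0=D_0=\id$ and $H_0=H'_0=0$, amounts to four facts: $C_1K_0=K'_0$, $D_1K'_0=K_0$, the bottom-row relations $K'_{l+1}K'_l=0$, and~\eqref{eq:onesol-H1'}, i.e.\ $C_1D_1=\id-H'_1K'_1$. Every check below should reduce to these four facts by pure matrix algebra. The verification splits into three groups: the top row is a complex, $C$ and $D$ are cochain maps, and the homotopy identities~\eqref{eq:CD-equiv} hold at each level.

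\emph{Top row is a complex.} For $K_1K_0$ the two components are $K_0-D_1K'_0=0$ and $K'_1C_1K_0=K'_1K'_0=0$. For $K_2K_1$ the first component is
\[
C_1-(C_1D_1+H'_1K'_1)C_1=0
\]
by~\eqref{eq:onesol-H1'}, and the second is $K'_2K'_1C_1=0$. The remaining compositions are $K'_{l+1}K'_l=0$.

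\emph{Cochain maps.} For $C$: $C_2K_1=K'_1C_1$, and $C_3K_2=K'_2C_2$, both immediate. For $D$ the key identities are:
\begin{itemize}
\item $K_1D_1=D_2K'_1$, using $(\id-D_1C_1)D_1=D_1H'_1K'_1$ and $K'_1C_1D_1=K'_1(\id-H'_1K'_1)$;
\item $K_2D_2=D_3K'_2$, where the first component $C_1D_1H'_1-H'_1+H'_1K'_1H'_1$ vanishes again by~\eqref{eq:onesol-H1'}.
\end{itemize}
Beyond this point all maps are identities or zero, and the checks are trivial.

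\emph{Homotopy identities.} At level $1$, $H_1K_1=\id-D_1C_1$ gives the top identity. The bottom identity is exactly~\eqref{eq:onesol-H1'}. At level $2$, I compute
\[
K_1H_1+H_2K_2=\begin{pmatrix}\id & -D_1H'_1\\ 0 & K'_1H'_1\end{pmatrix},
\]
so that $\id-K_1H_1-H_2K_2$ equals $D_2C_2$. On the bottom, $C_2D_2=\id-K'_1H'_1$, with $H'_2=0$. At level $3$, $K_2H_2$ has single nonzero entry $C_1D_1+H'_1K'_1=\id$, which gives $D_3C_3=\operatorname{diag}(0,\id)$. The bottom relation there is trivial, and subsequent levels are identities with zero homotopies.

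Once everything checks out, Proposition~\ref{prp:compat-sufficient} applies, with the end-case $\tH$ operators zero and truncated to any finite length as the remark after Definition~\ref{def:homalg} allows. The top row is therefore a full compatibility complex. The main obstacle is nothing conceptual but the bookkeeping of block matrices and signs, notably the $-H'_1$ and $-K'_1$ entries. Those signs must be fixed so that the level-2 homotopy identity closes, and I would check that one first.
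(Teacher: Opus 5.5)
Your proposal is correct and takes essentially the paper's route: the paper also just verifies the identities by explicit calculation, reducing them to $C_1K_0=K'_0$, $D_1K'_0=K_0$, $K'_{l+1}K'_l=0$ and \eqref{eq:onesol-H1'}, and your block-matrix checks (complex property, cochain maps, the level-2 and level-3 homotopy identities, vanishing end-case $\tH$'s) are all right. The paper's proof cites Proposition~\ref{prp:lift-compat} for the conclusion, but the transfer step you use, Proposition~\ref{prp:compat-sufficient}, is the one that actually does the work there.
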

\begin{proof}
All the required identities can be checked by explicit calculation,
reducing them to the identities in the hypotheses. The conclusion
follows from a simple application of Proposition~\ref{prp:lift-compat}.
\end{proof}

An immediate consequence is our main Theorem~\ref{main}, which exhibits
a compatibility complex for the conformal-to-Einstein operator
$E_0$~\eqref{E} with one independent solution, with a generic Weyl tensor.

\begin{proof}[Proof of Theorem~\ref{main}.]
The complex from the statement of the theorem is exactly the top line
of~\eqref{eq:onesol-equiv}, provided that
\begin{align*}
	V_0 &= \cE[1] , &
	V_1 &= \cE_{(ab)_0}[1], &
	V'_i &= \cE_{[a_1\cdots a_i]} [1] , \\
	K_0 &= E_0, &
	K'_i &= \newd ,
\end{align*}
and operators $C_1$ and $D_1$ are given by~\eqref{C1} and~\eqref{D1},
with $H'_1$ given by~\eqref{H'_1}. So the conclusion that it is a
compatibility complex follows from Lemma~\ref{lem:onesol-equiv}. We need only
check the commutativity of the first square of~\eqref{eq:onesol-equiv},
which is ensured by~\eqref{comm}, and the identity~\eqref{eq:onesol-H1'}
involving $H'_1$, which is just a rearrangement of
identity~\eqref{CDcomp}.
\end{proof}

\begin{remark} \label{lowdim}
It turns out that our construction of compatibility complexes provides a complete answer 
in certain low dimensional cases.

(1) Assume dimension $n=3$. Then the Weyl tensor is identically zero for all conformal structures $(M,[g])$.
We need to use the Cotton tensor $Y_{abc}$ instead:  $Y_{abc}$ vanishes if and only if $(M,[g])$ 
is conformally flat. Moreover, the solution space of $E_0$ is trivial unless $(M,[g])$ is flat.
Thus the suitable notion of genericity is the assumption $Y_{abc} \not= 0$  everywhere.
Then using $\zeta^{abc} Y_{abc}=1$ for some $\zeta^{abc}$, we have $\zeta^{abc} (E_1 \circ E_0(\si))_{abc} = \si$
according to~\eqref{comp}. This gives rise to $H_0$ satisfying $H_0 \circ E_0 = \id_{\cE[1]}$ 
and one can follow Section~\ref{sec:nosol} to construct a compatibility complex (of the length 3).

(2) Assuming  $n=4$, there is  the well-known dimension-dependent identity~\cite[Eq.(31)]{edgar-hoglund}
\begin{equation} \label{4dim}
W^{abcd}W_{abce} = \tfrac14 |W|^2 \delta_d^e. \quad |W|^2 = W^{abcd}W_{abcd}.
\end{equation}
Thus assuming moreover the Riemannian signature, the  invertibility discussion at~\eqref{eq:V-from-W} 
and~\eqref{eq:Winv-from-V} shows that the Weyl tensor
is either generic or identically zero. Further, one easily verifies that two linearly independent solutions
$\si, \bar{\si} \in \cE[1]$ of $E_0$ give rise to the conformal Killing field 
$k_a = \si \na_a \bar{\si} - \bar{\si} \na_a \si$ such that $W_{abcd}k^d=0$ (in any dimension and signature),
cf.~\cite{Glap};
this means non-genericity -- therefore conformal flatness ($W=0$) -- in dimension 4. Summarizing, 
we have the compatibility complex for all 4-dimensional conformal structures in the Riemannian signature.

(3) One can consider similar identities as in~\eqref{4dim} with more complicated tensor fields.
Assume $n=4$ and consider the following polynomial tensor in $W_{abcd}$,
$$
\overset{2}{W}_{abcd} := W_{rsab} W^{rs}{}_{cd} \quad \text{and} \quad
\overset{3}{V}{}_a^b := W_{rs}{}^{tb}   W^{rs}{}_{pq} W^{pq}{}_{ta}.
$$
If $|W|^2=0$ then also $W_{rsta}W^{rstb}=0$ according to~\eqref{4dim} hence
$\overset{2}{W}_{abcd}$ is totally trace-free. Following the approach of~\cite{edgar-hoglund},
this means the vanishing of $\overset{2}{W}_{[ab}{}^{[cd}\delta_{e]}{}^{f]}=0$. Contracting this relation
with $W^{ab}{}_{cd}$ yields
$$
\overset{3}{V}{}_a^b = \tfrac14 \overset{3}{V}{}_r^r \delta_a^b.
$$
For non-definite signatures, this can be used instead of~\eqref{4dim} in cases when  $|W|^2=0$ but 
$\overset{3}{V}{}_r^r$ is nonzero everywhere.

Neither of inversions of the Weyl tensor mentioned above works for all generic 4-manifolds. 
In Appendix~\ref{app:lorentzian} we exhibit a
family of examples in Lorentzian signature in 4-dimensions where \emph{any} 
scalar covariantly formed from the Weyl tensor must vanish. These are 
Petrov Type~III Weyl tensors in the terminology of Appendix~\ref{app:lorentzian}.
\end{remark}

\section{A projective analogue} \label{sec:p2e-compat}

There is a close analogue of the conformal operator $E_0$ in projective geometry. 
In fact, also the compatibility complex has a close projective relative. To show that, 
we need to find projective versions of invariant operators discussed in Section~\ref{s.c2e};
the construction in Section~\ref{sec:c2e-compat} will be just the same (using projective
versions of all invariant operators needed there).

Here we follow the notation from~\cite{BEG}. 
The projective structure $(M,[\na])$ on a smooth orientable manifold $M$, $\dim M \geq 2$ is given by a class $\na$ of 
special projectively equivalent torsion free connections. That is, connections in $[\na]$ 
are volume preserving and have 
the same class of geodesics (as unparametrized curves).  
Then the difference between $\na, \wh{\na} \in [\na]$ is controlled
by an exact 1-form $\Up_a = \na_a \Up$, $\Up \in C^\infty(M)$,
\begin{equation} \label{na-trans-proj}
\begin{split}
& \hat{\na}_a \si = \na_a \si + w \Up_a \si, \\
& \hat{\na}_a \ta_b = \na_a \ta_b - \Up_a \ta_b - \Up_b \ta_a, \\
& \hat{\na}_a \ph^b = \na_a \ph^b + \Up_a \ph^b + \Up_r\ph^r  \delta_a^b
\end{split}
\end{equation}
where $\si \in \cE[w]$, $\ta_b \in \cE_b$ and $\ph^b \in \cE^b$. Here $\cE[w]$ are projective density
bundles, see~\cite{BEG} for details. In this convention we have $\La^n T^*M \cong \cE[-(n+1)]$.
The Ricci tensor $Ric_{ab}$ and projective Schouten tensor $\Rho_{ab}$ are related by 
$Ric_{ab} = (n-1) \Rho_{ab}$ and these are symmetric tensors (for special connections $\na$).
The projective Cotton tensor is given by $Y_{abc} = 2\na_{[b} \Rho_{c]a}$.
The projectively invariant Weyl tensor $W_{ab}{}^c{}_d$ is the trace free part of the curvature tensor
$R_{ab}{}^c{}_d$ of $\na$. Projective flatness is characterized by $W_{ab}{}^c{}_d=0$ for $n \geq 3$ and
by $Y_{abc}=0$ for $n=2$.

\vspace{1ex}

We start with the sequence of operators $E_i$ given by formula~\eqref{BGGseq} 
which we consider in the projective sense: $\cE[1]$ are projective densities and the meaning
of the Cartan product is slightly different as $\boxtimes$ denotes only suitable symmetrizations of indices 
(there is no trace to be removed when all indices are down).
This sequence of operators is projectively invariant and 
it is a complex for projective flat class $[\na]$. Following the discussion in Section~\ref{s.cc-flat}, it 
provides a compatibility complex of the \emph{projective-to-Ricci-flat} operator%
\begin{equation} \label{E-proj}
	E_0 \colon \cE[1] \to \cE_{(ab)} , \quad
	\si \mapsto (\na_a\na_b + \Rho_{ab})\si .
\end{equation}
That is, solutions $\si$ of $E_0$ without zeros correspond to Ricci-flat connections $\wh{\na}$ in $[\na]$. 
Specifically, if $\na$ corresponds to the volume form $\epsilon$ (given uniquely up to a constant multiple), 
i.e.\ $\na \epsilon =0$, and we identify $\si$ with a function using $\epsilon$, then 
$\hat{\epsilon} = \si^{-(n+1)} \epsilon$ corresponds to $\wh{\na}$.

Next we discuss a suitable genericity condition for our purpose. 
Assuming projective meaning, we have  
\begin{equation} \label{comp-proj}
F_{abc} := (E_1 \circ E_0 (\si))_{abd} = -W_{ab}{}^c{}_d(\na_c \si) +  Y_{dab} \si
	\in \bigl( \cE_{[ab]} \boxtimes \cE_{c}\bigr)[1], 
\end{equation}
differs from~\eqref{comp} by position of indices of the Weyl tensor (which we cannot raise or lower).
Thus we shall call $(M,[\na])$ generic if the bundle map $\ta_c \mapsto W_{ab}{}^c{}_d\ta_c$ is injective.
Then there is an inverse $\ol{W}{}^{ab}{}_e{}^d$ of this map, i.e.,\
$\ol{W}{}^{ab}{}_e{}^d W_{ab}{}^c{}_d = \delta_e^c$.
Applying the inverse $-\ol{W}{}^{ab}{}_e{}^d$ to the previous display, we obtain
\begin{equation} \label{compinv-proj}
\na_a \si + Z_a \si = -\ol{W}{}^{rs}{}_a{}^t F_{rst} \quad \text{for} \quad 
Z_a := -\ol{W}{}^{rs}{}_a{}^t Y_{trs} \in \cE_a.
\end{equation}
By direct computation, we have $\wh{Z}_a = Z_a - \Up_a$ under the projective transformation~\eqref{na-trans-proj}.
Further, a similar computation as in Section~\ref{s.invariant} shows that the projective equation $E_0(\si)=0$
has a nontrivial solution if and only if $(dZ)_{ab}=0$ and $\Ph(Z)_{ab}=0$ where
\begin{align*}
& (dZ)_{ab} = 2\na_{[a}Z_{b]} \in \cE_{[ab]},  \\
& \Ph(Z)_{ab} := \na_{(a}Z_{b)} - \Rho_{(ab)} - Z_{(a}Z_{b)}  \in \cE_{(ab)}.
\end{align*}

Next we go through conformally invariant operators in Section~\ref{s.invariant} and discuss their projective analogues. First, (exterior) covariant derivatives $\wt{\na}$ and $\newd$ are given exactly as in~\eqref{newna} and~\eqref{newd}. Projectively invariant operators 
$C_1$ and $D_1$ can be obtained by a slight modification of~\eqref{C1} and~\eqref{D1}, namely
\begin{align}
\label{C1-proj}
C_1&: \cE_{(ab)}[1] \to \cE_{a}[1], &
\tau_{ab} &\mapsto -2\ol{W}{}^{rs}{}_a{}^t \na_{[r} \tau_{s]t}, \\
\label{D1-proj}
D_1&: \cE_b[1] \to \cE_{(ab)}[1], &
\tau_b &\mapsto \na_{(a} \tau_{b)} - Z_{(a}  \tau_{b)}.
\end{align}
Now it is easy to verify relations summarized in~\eqref{comm} and~\eqref{obstructE}
hold also in the projective case. Moreover, the operator $\ol{D}$ defined by~\eqref{Dbar}
is projectively invariant and we obtain also a projective modification of~\eqref{CDcomp}:
\begin{align} \label{CDcomp-proj}
\bigl( (C_1 \circ D_1)\ta \bigr)_a &=  \tau_a - (H'_1(\newd \tau))_{a} \!-\! \ol{W}{}^{rs}{}_a{}^p
\bigl[ 2 \Ph(Z)_{pr} \tau_s + \tfrac12 (dZ)_{rs} \tau_p \bigr] ,
\\ \notag &\qquad \text{where} \\
\label{H'_1-proj}
(H'_1 \psi)_a &= \tfrac{1}{2} \ol{W}{}^{rs}{}_a{}^p (\ol{D} \psi)_{prs} ,
\end{align}
for $\ta_a \in \cE_a[1]$, $\psi_{ab} \in \cE_{[ab]}[1]$.
Projective invariance of $\ol{D}$ means that also $H'_1$ is projectively invariant.

\vspace{1ex}

\begin{theorem} \label{thm:onesol-proj-cc}
In dimension $n\ge 3$, for the projective-to-Ricci-flat operator
$E_0$~\eqref{E-proj} of a generic projective class $[\na]$,
its compatibility complex is given by the top line
of~\eqref{eq:onesol-equiv}, provided that
\begin{align*}
	V_0 &= \cE[1] , &
	V_1 &= \cE_{(ab)}[1], &
	V'_i &= \cE_{[a_1\cdots a_i]} [1] \\
	K_0 &= E_0, &
	K'_i &= \newd 
\end{align*}
and operators $C_1$ and $D_1$ are given by~\eqref{C1-proj}
and~\eqref{D1-proj}, with $H'_1$ given by~\eqref{H'_1-proj}.
\end{theorem}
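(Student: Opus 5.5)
The plan is to copy the proof of Theorem~\ref{main}: apply Lemma~\ref{lem:onesol-equiv} with the stated choices of $V_i$, $K_i$, $C_1$, $D_1$, $H'_1$, and check its three hypotheses in the projective setting. Implicitly we are in the case of a one-dimensional solution space, so $dZ=0$ and $\Ph(Z)=0$; this is the analogue of the assumption in Theorem~\ref{main}. In the case with no solution, Lemma~\ref{lem:nosol-equiv} applies verbatim, with $H_0$ built as in~\eqref{eq:E0-invert} from the projective $C_1$ and $D_1$.

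\emph{Bottom row.} Since $dZ=0$, the connection $\wt{\na}$ on $\cE[1]$ is flat. Hence $\newd$ is the twisted de~Rham complex of a flat line bundle. Locally it is conjugate, via a nonvanishing parallel section $\si_0$ (equivalently, the solution of $E_0$), to the ordinary de~Rham complex on functions and forms. The de~Rham complex is a full compatibility complex by the Poincar\'e lemma, or by Section~\ref{s.cc-flat} applied to the trivial tractor bundle. The conjugation gives an equivalence up to homotopy, so Proposition~\ref{prp:compat-sufficient} shows that the bottom row is a full compatibility complex.

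\emph{First square.} With $C_0=D_0=\id$ and $H_0=H'_0=0$, the truncation to the first cell is an equivalence up to homotopy exactly when
\[
C_1\circ E_0=\newd, \qquad D_1\circ\newd=E_0, \qquad \id-D_1C_1=E_0H_0+H_1K_1\ \text{restricted appropriately}.
\]
The last condition is built into the structure of the top row, so only the first two identities need checking. Both are the projective versions of~\eqref{comm}, which the paper states hold projectively, together with $\Ph(Z)=0$. To verify $C_1E_0=\newd$, I would compute $E_1E_0$ as in~\eqref{comp-proj} and apply $-\ol{W}$ as in~\eqref{compinv-proj}. To verify $D_1\newd=E_0$, I would expand $\na_{(a}(\na_{b)}\si+Z_{b)}\si)-Z_{(a}\na_{b)}\si-Z_{(a}Z_{b)}\si$ and recognise $E_0(\si)+\Ph(Z)\si$.

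\emph{Homotopy identity.} Hypothesis~\eqref{eq:onesol-H1'} is $\id-C_1D_1=H'_1\newd$ on $\cE_a[1]$. This follows by rearranging~\eqref{CDcomp-proj} once the $\Ph(Z)$ and $dZ$ terms are dropped. The main obstacle is therefore the direct verification of~\eqref{CDcomp-proj} itself. I would expand $-2\ol{W}{}^{rs}{}_a{}^t\na_{[r}(\na_{s}\tau_{t)}-Z_{(s}\tau_{t)})$ and split $\na_s\tau_t$ into its symmetric and skew parts. The commutator $\na_{[r}\na_{s]}\tau_t$ produces curvature $R_{rs}{}^p{}_t\tau_p$, whose Weyl part contracts with $\ol{W}$ to $\tau_a$. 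The remaining Schouten-type trace parts have to combine with the $Z$ and $\na Z$ terms into $\Ph(Z)$ and $dZ$, using the definition of $Z$ through $Y$. The skew part $\tfrac12(\newd\tau)_{st}$, differentiated, should assemble into $\Pr_\boxtimes(\na-2Z)$, i.e. into $\ol{D}$. Here I expect sign and index-position care (no raising or lowering is available projectively) to be the delicate point. Once these hypotheses hold, Lemma~\ref{lem:onesol-equiv} gives the conclusion.
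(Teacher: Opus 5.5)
Your proposal is correct and follows the paper's own route: apply Lemma~\ref{lem:onesol-equiv}, with the first square commuting by the projective analogues of~\eqref{comm}, and with the identity~\eqref{eq:onesol-H1'} obtained by rearranging~\eqref{CDcomp-proj}. You are right that the one-dimensional solution space ($dZ=0$, $\Ph(Z)=0$) has to be assumed even though the statement leaves it implicit, and your flatness argument for the bottom row supplies a step the paper takes for granted; also, only the reduced identity $\id - C_1 D_1 = H'_1\newd$ is needed, so the exact form of the $\Ph(Z)$ and $dZ$ terms in~\eqref{CDcomp-proj} never matters.
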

\begin{proof}
Again, the conclusion follows from Lemma~\ref{lem:onesol-equiv}. We need
only check the commutativity of the first square
of~\eqref{eq:onesol-equiv}, which is ensured by the projective analog of
identities~\eqref{comm}, and the identity~\eqref{eq:onesol-H1'}
involving $H'_1$, which is now a rearrangement of
identity~\eqref{CDcomp-proj}.
\end{proof}

For dimension $n=2$, $(M,[\na])$ is either projectively flat or there is
only trivial solution of projective $E_0$. In the latter case, the
projective Cotton tensor $Y_{abc}$ is nonzero and one can easily
reproduce the discussion in Remark~\ref{lowdim}(1).

\appendix

\section{Lorentzian signature in 4-dimensions} \label{app:lorentzian}

\newcommand{\m}{\bar{m}}

We have noted in the text that the choice~\eqref{eq:Winv-from-V} of the
inversion $\ol{W}{}^{abcd}$, combined with the 4-dimensional
identity~\eqref{4dim} in Remark~\ref{lowdim}(2), works for any generic Weyl tensor of a Riemannian
metric. That is because the quadratic invariant $|W|^2 = W_{abcd}
W^{abcd} \ne 0$ if $W_{abcd} \ne 0$. The last implication no longer
holds for indefinite signature metrics. But does then the
inversion~\eqref{eq:Winv-from-V} necessarily fail? Remark~\ref{lowdim}(3) shows that more complicated canonical inversions (where $\ol{W}{}^{abcd}$ is covariantly and algebraically determined from $W_{abcd}$) may work when $|W|^2=0$. However, in this Appendix, we
find explicit examples in Lorentzian signature where all such canonical inversions fail.

When dealing with a 4-dimensional conformal structure $(M,[g])$, where
the signature of a metric representative $g$ is Lorentzian
$({-}{+}{+}{+})$, it is sometimes convenient to use an double null frame
$\{l,n,m,\m\}$ adapted to $g$, where $l,n$ are real null vectors, while
$m,\m$ are complex conjugate null vectors, with the only non-vanishing
inner products $g(l,n) = 1$ and $g(m,\m) = -1$. We are following the
conventions of~\cite[\S 8]{chandra-bh}. The structural properties of
this frame are preserved by the boosts in the $(l,n)$-plane and
rotations in the $m,\bar{m}$ plane. As such we can assign the following
(boost, spin) weights to the frame vectors: $|l| = (1,0)$, $|n| =
(-1,0)$, $|m| = (0,1)$, $|m| = (0,-1)$, which are then preserved by
tensor products, permutations and metric contractions. The boost and
spin weights naturally extend to the elements of the frame adapted basis
of any tensor space.

In 4-dimensions, the bundle $\cW_{pqrs}$ of 4-tensors with the same
structure as the Weyl tensor $W_{pqrs}$ (the same permutation symmetries
as the Riemann tensor and traceless) has rank 10 and the fibers can be
conveniently parametrized by the following 5 complex
\emph{Newman-Penrose scalars}~\cite[Eq.(1.294)]{chandra-bh}
\begin{equation}\label{eq:np-weyl}\begin{aligned}
	\Psi_0 &= -W_{pqrs} l^p      m^q       l^r       m^s , \\
	\Psi_1 &= -W_{pqrs} l^p      n^q       l^r       m^s , \\
	\Psi_2 &= -W_{pqrs} l^p      m^q  \m^r      n^s , \\
	\Psi_3 &= -W_{pqrs} l^p      n^q  \m^r      n^s , \\
	\Psi_4 &= -W_{pqrs} n^p \m^q      n^r  \m^s .
\end{aligned}\end{equation}
They are chosen so that when only one of these scalars is non-zero, the
corresponding Weyl tensor has pure boost weight, namely weight $-2$ for
$\Psi_0$, $-1$ for $\Psi_1$, $0$ for $\Psi_2$, $1$ for $\Psi_3$ and $2$
for $\Psi_4$. The \emph{Petrov classification}~\cite[\S 9]{chandra-bh}
of the algebraic type of the Weyl tensor corresponds to the following
filtration: Type~I ($\Psi_0=0$), Type~II ($\Psi_0=\Psi_1=0$), Type~III
($\Psi_0=\Psi_1=\Psi_2=0$) and Type~N ($\Psi_0= \Psi_1 = \Psi_2 = \Psi_3 =
0$).
The reconstruction of the original tensor from the
scalars~\eqref{eq:np-weyl} is given by~\cite[Eq.(1.298)]{chandra-bh}%
	\footnote{That formula has a typo; it is missing a factor of
	$\frac{1}{2}$ in front of its first term.}, %
but in a somewhat awkward form. We give an alternative formula below,
which is more convenient for our purposes.

We will use the following symmetrized and \emph{Kulkarni-Nomizu}
products
\begin{align}
\label{eq:sym-prod}
	(A B)_{pr} &= A_{(p} B_{r)} ,
	\\
\label{eq:kn-prod}
	(A \odot B)_{pqrs}
	&= A_{pr} B_{qs}
	 - A_{qr} B_{ps}
	 - A_{ps} B_{qr}
	 + A_{qs} B_{pr} .
\end{align}
If $A, B \in \cE_{(pr)}$, then $(A\odot B)_{pqrs}$ has the same index permutation symmetries as a Weyl tensor, with the exception of not automatically being traceless. 
A convenient reconstruction formula from the Newman-Penrose
scalars~\eqref{eq:np-weyl} is
\begin{scriptsize}
\begin{equation}
\begin{aligned}
	W
	&= -\Psi_0 (nn) \odot (\m\m) - \Psi_0^* (nn) \odot (mm)
	\\ &\quad {}
		+ 2\Psi_1   \big((nn) \odot (l\m) + (nm) \odot (\m\m)\big)
		+ 2\Psi_1^* \big((nn) \odot (lm) + (n\m) \odot (mm)\big)
	\\ &\quad {}
		- (\Psi_0 + \Psi_0^*)   \big((ll) \odot (nn) + (mm) \odot (\m\m) - 2 (ln) \odot (m\m)\big)
	\\ &\qquad {}
		+ 2i(\Psi_0 - \Psi_0^*) \big((lm) \odot (n\m) - (l\m) \odot (nm)\big)
	\\ &\quad {}
		+ 2\Psi_3^* \big((ll) \odot (n\m) + (lm) \odot (\m\m)\big)
		+ 2\Psi_3   \big((ll) \odot (nm) + (l\m) \odot (mm)\big)
	\\ &\quad {}
		-\Psi_4^* (ll) \odot (\m\m) - \Psi_4 (ll) \odot (mm)
	.
\end{aligned}
\end{equation}
\end{scriptsize}
The quadratic invariant of the Weyl tensor evaluates to
\begin{equation} \label{eq:np-weyl2}
	|W|^2
	= W_{pqrs} W^{pqrs}
	= 16 \Re (\Psi_0 \Psi_4 - 4 \Psi_1 \Psi_3 + 3 \Psi_2^2) .
\end{equation}
Since the $\Psi_i$ can be arbitrary complex numbers, we can definitely
find choices for which $W_{pqrs} \ne 0$, but $|W|^2=0$.
This means that the inversion
formula~\eqref{eq:Winv-from-V} for $\ol{W}{}^{pqrs}$, combined with
identity~\eqref{4dim}, cannot work. It remains to check whether there
exist any Weyl tensors that have $|W|^2=0$ and are generic in our sense
(Section~\ref{s.generic}).

Note that in the contraction~\eqref{eq:np-weyl2}, only those terms
survive that have total zero boost weight (since scalars can only have
vanishing boost weight). By that argument (or even more directly from formula~\eqref{eq:np-weyl2}), we can read off that tensors
of Type~III (which have only positive boost weight components) will be non-trivial but have $|W|^2 = 0$. The same argument implies that any scalar polynomial invariant built from a Type~III Weyl tensor will also vanish, including the cubic invariant $\overset{3}{V}{}_r^r$, meaning that the canonical inversion from Remark~\ref{lowdim}(3) also fails.

To verify genericity, We will next show
that the map $\mathrm{W} \colon \cE^s \to \cH_{pqr}[2] \subset \cE_{pqr}[2]$ for
Type~III tensors is injective when $\Psi_3\ne 0$, where $\cH_{pqr}[2]$
denotes the sub-bundle consisting of conformally weighted tensors
satisfying $H_{(pq)r} = H_{[pqr]} = 0$ and $g^{pr} H_{pqr} = 0$. This
will be obvious, once we find the matrix form of $\mathrm{W}$ in a basis
adapted to our double null frame.

It is convenient to introduce the special wedge product
\begin{equation} \label{eq:wedge-prod}
	(A \wedge B)_{pqr} = A_{pr} B_q - A_{qr} B_q .
\end{equation}
If $A \in \cE_{(pr)}$, then $(A \wedge B)_{(pq)r} = (A \wedge B)_{[pqr]}
= 0$. A convenient basis for $\cH_{pqr}[2]$ is the following, where we
group its elements by the indicated boost weight:
\begin{equation} \label{eq:hook-basis}
\begin{array}{rccccc}
	-2\colon & (nn)\wedge\m, & (nn)\wedge m, \\
	-1\colon & (\m\m)\wedge n, &
		2 (nm)\wedge\m + (nn)\wedge l, \\ &
		2 (n\m)\wedge m + (nn)\wedge l, &
		(mm)\wedge n, \\
	 0\colon &
		 2 (l\m)\wedge n + (\m\m)\wedge m, &
		 2 (n\m)\wedge l + (\m\m)\wedge m, \\ &
		 2 (lm)\wedge n + (mm)\wedge\m, &
		 2 (nm)\wedge l + (mm)\wedge\m \\
	 1\colon & (\m\m)\wedge l &
		2 (lm)\wedge\m + (ll)\wedge n, \\ &
		2 (l\m)\wedge m + (ll)\wedge n, &
		 (mm)\wedge l, \\
	 2\colon & (ll)\wedge\m & (ll)\wedge m ,
\end{array}
\end{equation}
where we will order the basis by boost weight with respect to the above
table, in row major order and from left to right within rows.

Since the map $\mathrm{W}$ is defined by a contraction, it preserves
boost weights. Moreover, if only the $\Psi_3$ and $\Psi_4$ scalars
parametrizing $W_{pqrs}$ are non-vanishing, contracting with the Weyl
tensor can only increase the boost weight by $1$ or $2$ respectively.
Clearly, the matrix form of $\mathrm{W}$ will be block triangular with
respect to subspaces of pure boost weight and checking injectivity will
come down to checking the injectivity of the blocks on the leading
diagonal. Parametrizing vectors as $v^s = v_l l^s + v_m m^s + v_m^* \m^s
+ v_n n^s$ and expressing the contraction $W_{pqrs} v^s$ in the
basis~\eqref{eq:hook-basis} shows that
\begin{equation} \label{eq:W-matrix}
	\mathrm{W} \begin{bNiceArray}{c}[margin]
		v_n \\ \midrule v_m^* \\ v_m \\ \midrule v_l
	\end{bNiceArray}
	= \begin{bNiceArray}{c|cc|c}[margin]
			& & & \\
			& & & \\ \midrule
			& & & \\
			& & & \\
			& & & \\
			& & & \\ \midrule
			0 & & & \\
			\Psi_3^* & & & \\
			0 & & & \\
			\Psi_3 & & & \\ \midrule
			-\Psi_4^* & -\Psi_3^* & 0 & \\
			0 & 0 & -\Psi_3^* & \\
			0 & -\Psi_3 & 0 & \\
			-\Psi_4 & 0 & -\Psi_3 & \\ \midrule
			& 0 & \Psi_4^* & \Psi_3^* \\
			& \Psi_4 & 0 & \Psi_3
		\end{bNiceArray}
		\begin{bNiceArray}{c}[margin]
			v_n \\ \midrule v_m^* \\ v_m \\ \midrule v_l
		\end{bNiceArray}
       .
\end{equation}
Indeed we see that the matrix in~\eqref{eq:W-matrix} is injective
exactly when $\Psi_3 \ne 0$. This means that a typical Type~III Weyl
tensor is generic in our sense, while having $|W|^2 = 0$. We can also
immediately see that Type~N Weyl tensors are not generic.

\bibliographystyle{utphys-alpha}
\bibliography{CC-conformal}

\providecommand{\href}[2]{#2}\begingroup\raggedright\begin{thebibliography}{10}

\bibitem{aabkhw}
S.~Aksteiner, L.~Andersson, T.~B{\"a}ckdahl, I.~Khavkine, and B.~Whiting,
  ``Compatibility complex for black hole spacetimes,''
  \href{http://dx.doi.org/10.1007/s00220-021-04078-y}{{\em Communications in
  Mathematical Physics} {\bfseries 384} (2021) 1585--1614},
  \href{http://arxiv.org/abs/1910.08756}{{\ttfamily arXiv:1910.08756}}.

\bibitem{ABB}
L.~Andersson, T.~B\"ackdahl, and P.~Blue, ``Second order symmetry operators,''
  {\em Classical and Quantum Gravity} {\bfseries 31} no.~13, (2014) 135015, 38.

\bibitem{BEG}
T.~N. Bailey, M.~G. Eastwood, and A.~R. Gover, ``Thomas's structure bundle for
  conformal, projective and related structures,''
  \href{http://dx.doi.org/10.1216/rmjm/1181072333}{{\em The Rocky Mountain
  Journal of Mathematics} {\bfseries 24} (1994) 1191--1217}.

\bibitem{Besse}
A.~L. Besse, \href{http://dx.doi.org/10.1007/978-3-540-74311-8}{{\em Einstein
  manifolds}}, vol.~10 of {\em Ergebnisse der Mathematik und ihrer Grenzgebiete
  (3)}.
\newblock Springer-Verlag, Berlin, 1987.

\bibitem{BC}
B.~D. Boe and D.~H. Collingwood, ``A comparison theory for the structure of
  induced representations. {II},''
  \href{http://dx.doi.org/10.1007/BF01159158}{{\em Mathematische Zeitschrift}
  {\bfseries 190} (1985) 1--11}.

\bibitem{Brinkmann}
H.~W. Brinkmann, ``Einstein spaces which are mapped conformally on each
  other,'' \href{http://dx.doi.org/10.1007/BF01208647}{{\em Mathematische
  Annalen} {\bfseries 94} (1925) 119--145}.

\bibitem{bcggg}
R.~L. Bryant, S.~S. Chern, R.~B. Gardner, H.~L. Goldschmidt, and P.~A.
  Griffiths, {\em Exterior Differential Systems}, vol.~18 of {\em Mathematical
  Sciences Research Institute Publications}.
\newblock Springer, New York, 1991.

\bibitem{calabi}
E.~Calabi, \href{http://dx.doi.org/10.1090/pspum/003}{``On compact,
  {R}iemannian manifolds with constant curvature. {I},''} in {\em Differential
  Geometry}, C.~B. Allendoerfer, ed., vol.~3 of {\em Proceedings of Symposia in
  Pure Mathematics}, pp.~155--180.
\newblock AMS, Providence, RI, 1961.

\bibitem{CD}
D.~M.~J. Calderbank and T.~Diemer, ``Differential invariants and curved
  {B}ernstein-{G}elfand-{G}elfand sequences,''
  \href{http://dx.doi.org/10.1515/crll.2001.059}{{\em Journal f\"ur die Reine
  und Angewandte Mathematik} {\bfseries 537} (2001) 67--103}.

\bibitem{chandra-bh}
S.~Chandrasekhar, {\em The mathematical theory of black holes}, vol.~69 of {\em
  International series of monographs on physics}.
\newblock Clarendon Press, 1983.

\bibitem{celm-riemann}
F.~Costanza, M.~Eastwood, T.~Leistner, and B.~McMillan, ``A {Calabi} operator
  for {Riemannian} locally symmetric spaces,'' 2021.
\newblock \href{http://arxiv.org/abs/2112.00841}{{\ttfamily arXiv:2112.00841}}.

\bibitem{celm-lorentz}
F.~Costanza, M.~Eastwood, T.~Leistner, and B.~McMillan, ``The range of a
  connection and a {Calabi} operator for {Lorentzian} locally symmetric
  spaces,'' 2023.
\newblock \href{http://arxiv.org/abs/2302.04480}{{\ttfamily arXiv:2302.04480}}.

\bibitem{eastwood-calabi}
M.~Eastwood, ``Variations on the de {R}ham complex,'' {\em Notices of the
  American Mathematical Society} {\bfseries 46} (1999) 1368--1376.
  \url{http://www.ams.org/notices/199911/fea-eastwood.pdf}.

\bibitem{edgar-hoglund}
S.~B. Edgar and A.~H\"oglund, ``Dimensionally dependent tensor identities by
  double antisymmetrization,'' \href{http://dx.doi.org/10.1063/1.1425428}{{\em
  Journal of Mathematical Physics} {\bfseries 43} (2002) 659--677},
  \href{http://arxiv.org/abs/gr-qc/0105066}{{\ttfamily arXiv:gr-qc/0105066}}.

\bibitem{cEspaces}
A.~Garcia-Parrado, J.~Herrera, and M.~Vadillo, ``Conformal {Einstein} spaces
  and conformally covariant operators,'' 2026.
\newblock \href{http://arxiv.org/abs/2601.17044}{{\ttfamily arXiv:2601.17044
  [math.DG]}}.
\newblock \url{https://arxiv.org/abs/2601.17044}.

\bibitem{Glap}
A.~R. Gover, ``Laplacian operators and {$Q$}-curvature on conformally
  {E}instein manifolds,''
  \href{http://dx.doi.org/10.1007/s00208-006-0004-z}{{\em Mathematische
  Annalen} {\bfseries 336} (2006) 311--334}.

\bibitem{GoNu}
A.~Gover and P.~Nurowski, ``Obstructions to conformally {Einstein} metrics in
  $n$ dimensions,''
  \href{http://dx.doi.org/10.1016/j.geomphys.2005.03.001}{{\em J.\ Geom.\
  Phys.} {\bfseries 56} (2006) 450--484},
  \href{http://arxiv.org/abs/math/0405304}{{\ttfamily arXiv:math/0405304}}.

\bibitem{kh-calabi}
I.~Khavkine, ``The {Calabi} complex and {Killing} sheaf cohomology,''
  \href{http://dx.doi.org/10.1016/j.geomphys.2016.06.009}{{\em Journal of
  Geometry and Physics} {\bfseries 113} (2017) 131--169},
  \href{http://arxiv.org/abs/1409.7212}{{\ttfamily arXiv:1409.7212}}.

\bibitem{kh-compat}
I.~Khavkine, ``Compatibility complexes of overdetermined {PDEs} of finite type,
  with applications to the {Killing} equation,''
  \href{http://dx.doi.org/10.1088/1361-6382/ab329a}{{\em Classical and Quantum
  Gravity} {\bfseries 36} (2019) 185012},
  \href{http://arxiv.org/abs/1805.03751}{{\ttfamily arXiv:1805.03751}}.

\bibitem{Kostant}
B.~Kostant, ``Lie algebra cohomology and the generalized {B}orel-{W}eil
  theorem,'' \href{http://dx.doi.org/10.2307/1970237}{{\em Annals of
  Mathematics. (2)} {\bfseries 74} (1961) 329--387}.

\bibitem{MRS}
J.-P. Michel, F.~Radoux, and J.~\v{S}ilhan, ``Second order symmetries of the
  conformal {L}aplacian,'' \href{http://dx.doi.org/10.3842/SIGMA.2014.016}{{\em
  SIGMA. Symmetry, Integrability and Geometry. Methods and Applications}
  {\bfseries 10} (2014) 016}.

\bibitem{seiler-inv}
W.~M. Seiler, {\em Involution: The Formal Theory of Differential Equations and
  its Applications in Computer Algebra}, vol.~24 of {\em Algorithms and
  Computation in Mathematics}.
\newblock Springer, 2010.

\bibitem{spencer}
D.~C. Spencer, ``Overdetermined systems of linear partial differential
  equations,'' \href{http://dx.doi.org/10.1090/s0002-9904-1969-12129-4}{{\em
  Bulletin of the American Mathematical Society} {\bfseries 75} (1969)
  179--240}.

\bibitem{tarkhanov}
N.~N. Tarkhanov, \href{http://dx.doi.org/10.1007/978-94-011-0327-5}{{\em
  Complexes of Differential Operators}}, vol.~340 of {\em Mathematics and Its
  Applications}.
\newblock Kluwer, Dordrecht, 1995.

\bibitem{CSS}
A.~\v{C}ap, J.~Slov\'ak, and V.~Sou\v{c}ek, ``Bernstein-{G}elfand-{G}elfand
  sequences,'' \href{http://dx.doi.org/10.2307/3062111}{{\em Annals of
  Mathematics. (2)} {\bfseries 154} (2001) 97--113}.

\bibitem{CS}
A.~\v{C}ap and V.~Sou\v{c}ek, ``Curved {C}asimir operators and the {BGG}
  machinery,'' \href{http://dx.doi.org/10.3842/SIGMA.2007.111}{{\em SIGMA.
  Symmetry, Integrability and Geometry. Methods and Applications} {\bfseries 3}
  (2007) 111}.

\bibitem{weibel}
C.~A. Weibel, \href{http://dx.doi.org/10.1017/CBO9781139644136}{{\em An
  introduction to homological algebra}}, vol.~38 of {\em Cambridge Studies in
  Advanced Mathematics}.
\newblock Cambridge University Press, Cambridge, 1994.

\end{thebibliography}\endgroup

\end{document}